\documentclass[12pt, parskip=full]{scrartcl}
\pdfoutput=1
\usepackage[ngerman, english]{babel}
\usepackage[utf8]{inputenc}
\setkomafont{sectioning}{\normalfont\normalcolor\bfseries}

\usepackage{amsmath}
\usepackage{amsfonts}
\usepackage{amssymb}
\usepackage{amsthm}
\usepackage{stmaryrd}

\theoremstyle{plain}
\newtheorem{theorem}{Theorem}[section]
\newtheorem{lemma}[theorem]{Lemma}
\newtheorem{corollary}[theorem]{Corollary}
\newtheorem{assumption}{Assumption}

\theoremstyle{definition}
\newtheorem{definition}{Definition}[section]

\theoremstyle{remark}

\newtheorem{remark}[theorem]{Remark}

\usepackage[round]{natbib}
\usepackage{url}
\usepackage{abstract}

\usepackage{hyperref}
\usepackage{graphicx}
\usepackage{xcolor}

\usepackage{pgf}
\usepackage{tikz}
\usetikzlibrary{arrows,automata,positioning,calc}

\title{Stationary Equilibria of Mean Field Games with Finite State and Action Space}
\author{Berenice Anne Neumann\thanks{Universit\"at Hamburg, Department of Mathematics, STSP, Bundesstr. 55 (Geomatikum), 20146 Hamburg, Germany, E-mail address: berenice.neumann@uni-hamburg.de}}

\begin{document}
	
	\maketitle
	
	\begin{abstract}
		Mean field games formalize dynamic games with a continuum of players and explicit interaction where the players can have heterogeneous states. As they additionally yield approximate equilibria of corresponding $N$-player games, they are of great interest for socio-economic applications. 
		However, most techniques used for mean field games rely on assumptions that imply that for each population distribution there is a unique optimizer of the Hamiltonian. 
		For finite action spaces, this will only hold for trivial models. 
		Thus, the techniques used so far are not applicable. 
		We propose a model with finite state and action space, where the dynamics are given by a time-inhomogeneous Markov chain that might depend on the current population distribution. 
		We show existence of stationary mean field equilibria  {in mixed strategies} under mild assumptions and propose techniques to compute all these equilibria. 
		More precisely, our results allow –- given that the generators are irreducible -- to characterize the set of stationary mean field equilibria as the set of all fixed points of a map completely characterized by the transition rates and rewards for deterministic strategies. 
		Additionally, we propose several partial results for the case of non-irreducible generators and we demonstrate the presented techniques on two examples. 
	\end{abstract}
	
	\section{Introduction}
	
	Mean field games have been introduced independently by \citet{LasryJapanese2007} and \citet{HuangNCE2006} in order to provide a framework for dynamic stochastic games in continuous time with a continuum of players, whose equilibria furthermore serve as approximate Nash equilibria for corresponding $N$-player games. The main feature of these games is that any player does not observe the state and action of each other player individually, but only at an aggregated level through the empirical distribution of these characteristics. 
	
	From an economic perspective these games are of particular interest as they yield a formal way to describe games with a continuum of rational players that accounts for explicit interaction (in contrast to the classical assumption in general equilibrium theory that ``prices mediate all social interaction'' \citep{ParisPrinceton2010}) as well as heterogeneity of states (in contrast to representative agent models (see \citet[p.209]{GomesEconomicModelsMFG2015})). Therefore a wide range of economic models relying on mean field games emerged, which includes growth models, the production of an exhaustible resource by a continuum of producers as well as opinion dynamics (see \citet{GomesEconomicModelsMFG2015}, \citet{ParisPrinceton2010}, \citet{CainesHandbook} and the references therein).
	
	In classical mean field games models each individual player's dynamics is given by a diffusion process whose drift and volatility depend on time, the current state and the current action of the individual player as well as the current distribution of all players.
	Each player individually solves an optimal control problem given these dynamics, where the costs also depend on the current state and action of the individual player as well as distribution of all players.
	A mean field equilibrium is then given by a flow of population distributions $m$ such that there is an optimal strategy $\pi$ for the individual control problem given $m$ and the distribution of the individual player given this strategy is in turn $m$. 
	This is a natural analogue of a Nash equilibrium of games with $N$ players: Given that all players play the strategy $\pi$, no player wants to deviate from playing $\pi$, as the population distribution is $m$ and $\pi$ is a best response to it.
	
	To solve this type of mean field games one then sets up several assumptions, which usually include the assumption that there is a unique optimizer of the Hamiltonian.
	Then one can show that finding a mean field equilibrium boils down to solving a system of a Hamilton-Jacobi-Bellman equation coupled with a Fokker-Planck-equation -- or if one prefers the probabilistic approach -- to a Forward-Backward Stochastic Differential Equation (FBSDE). 
	The forward-backward structure of these differential equations is non-standard and a wide range of the literature regarding mean field games covers the analysis of these equations. 
	For more details consider \citet{BensoussanMFG} and \citet{CarmonaMFG2018, CarmonaMFGPartTwo2018} as well as the references therein.
	{We remark, that in \citet{LackerMixedStrategy} existence of mean field equilibria in mixed strategies is proven under continuity, measureability and boundedness conditions, which, in particular, do not include the assumption that a unique optimizer of the Hamiltonian exists.
		A similar existence result for mean field games with controlled jump-diffusion dynamics can be found in \citet{BennazzoliMixedStrategy}. }
	
	{In \citet{GomesDiscrete2010,GomesConti2013} mean field games with finite state space have been introduced and thereafter several other (more general) mean field game models with finite state spaces haven been considered. 
		We give an overview at the end of the introduction.
		Also applications of mean field games with finite state space have been considered.
		However, several applications consider finite action spaces (for example \citet{KolokoltsovBotnet2016}, \citet{KolokoltsovCorruption2017}, \citet{GueantDiss2009} and \citet{BesancenotParadigm2015}) and for this type of models the literature covers only an existence result in mixed strategies (also called relaxed strategies) (see \citet{CecchinProbabilistic2018}). Indeed, for non-trivial models with finite action spaces there are always population distributions for which more than one optimizer of the Hamiltonian exists, in which case most of the techniques presented in the literature so far are not applicable. 
		For this reason for the previously mentioned examples the} authors develop their own tools to solve their particular model: Kolokoltsov and Bensoussan(2016) and Kolokoltsov and Malafeyev(2017) only analyse stationary equilibria in deterministic strategies, Gueant(2009a) and Besancenot and Dogguy(2015) set up a dynamics equation only after analysing optimal decisions given a certain population distribution. 
	They then also focus on stationary equilibria, as well as the dynamic behaviour close to these stationary equilibria and the effect of shocks.
	
	In this paper, we will present general tools to compute stationary equilibria of mean field games with finite state and action space: We introduce the notion of stationary mean field equilibria into the mean field game model with finite state and finite action space presented in \citet{DoncelExplicit2016}. 
	{We remark that we consider a stationary equilibrium in an infinite horizon mean field game where the expected discounted reward ($\int_0^\infty e^{-\beta t} \ldots$) is maximized and not, as in many other settings, a stationary (ergodic) equilibrium, where the expected average reward ($\limsup_{T \rightarrow \infty} \int_0^T \ldots$) is maximized.}
	Since the analytic formulation of \citet{DoncelExplicit2016} is not suitable for this task, we formulate the model in a probabilistic way: 
	The individual dynamics of a player is given by a time-inhomogeneous continuous time Markov chain with the generator depending on the current action and the current distribution of the population; the costs depend on the current state as well as the current action of the individual player and the current population distribution. 
	As in \citet{DoncelExplicit2016} in the case of dynamic equilibria, we only prove existence of stationary mean field equilibria in mixed strategies.

	We focus on stationary equilibria for several reasons: First, searching for stationary equilibria reduces the complexity of the considered problem. More precisely, we can utilize the standard theory on Markov decision process with stationary transition rates and rewards and we can considered a fixed point problem in $\mathbb{R}^S$ instead of a fixed point problem in some function space. 
	Second, the main focus of the economic models studied so far also lies in stationary equilibria. 
	Third and linked to the last reason, one can often establish some kind of convergence/adjustment process towards stationary equilibria. 
	More precisely, \citet{GomesConti2013} prove that  {for mean field games with finite state space} under certain assumptions every dynamic mean field equilibrium converges to the stationary  {(ergodic)} equilibrium.
	{In the case of continuous state spaces \citet{CardaliaguetContiTrend,CardaliaguetContiTrend2} proves that under certain assumptions every dynamic mean field equilibrium converges to the stationary (ergodic) equilibrium} and
	\citet{GueantReference2009} describes a cognitive process that converges if it is started close to stationary equilibrium indeed to the stationary  {(ergodic)} equilibrium. 
	Moreover, in the examples presented by \citet{GueantDiss2009} and \citet{BesancenotParadigm2015} it is shown that there is local convergence of the trajectories of the dynamic equilibrium to the stationary  {(discounted reward)} equilibrium. 
	
	Relying on our probabilistic formulation of the model, we will show existence of stationary equilibria under the same conditions as in the dynamic case. This is compared to \citet{GomesConti2013} a surprising result as they need several additional assumptions compared to the dynamic existence result to establish existence of stationary equilibria.
	
	Thereafter, we derive tools to compute all stationary equilibria (including those where the equilibrium strategy randomizes over different actions). As in standard mean field game models, we first have to solve an optimal control problem given a fixed flow of population distribution and second a fixed point problem, namely searching for flows of population distributions $m$ such that $m$ is the distribution of an individual player playing optimal given $m$.
	We will see that the first problem is equivalent to solving a standard Markov decision process with expected discounted reward criterion and we will show that the set of all randomized optimal stationary strategies is the convex hull of all deterministic optimal stationary strategies. 
	For general dynamics, we cannot simplify the fixed point problem directly but we will provide a generally applicable reformulation of the necessary and sufficient balance equations inspired by the cut criterion for standard Markov chains, which often proves helpful in examples.
	Assuming irreducibility of the generators of the individual dynamics given any population distribution and any strategy, we can obtain all distributions of stationary equilibria as the fixed points of a set-valued map with convex values, which can be completely characterized by the transition rates and rewards given deterministic strategies.
	
	\subsection{Related Literature}
	
	As indicated previously we would like to sketch briefly the research regarding finite state mean field games:
	The starting point regarding the study of finite state mean field games were the models of \citet{GomesDiscrete2010} (in discrete time), \citet{GomesConti2013} and \citet{GueantReduction2011, GueantCongestion2015}.
	We will focus on the continuous time models here: In both models, fully controllable transition rates are considered (\citet{GueantReduction2011, GueantCongestion2015} additionally assumes that the players might not reach all other states from a given state)  such that the player's dynamics is given by a time-inhomogeneous Markov chain. 
	The costs consist of instantaneous costs depending on the current state and action of the individual player as well as the current distribution of all players together with a terminal cost depending on the current state of the individual player and the current distribution of all players. 
	In both models, assumptions are set up such that there is always a unique optimizer of the Hamiltonian.
	For both models then a system of forward-backward ODEs is presented, the solution of which yields a mean field equilibrium. 
	Moreover, existence and uniqueness of solutions to these equations is discussed.
	
	\citet{GueantReduction2011} discusses further sufficient conditions for the existence of mean field equilibria (including a discrete state master equation); \citet{GomesConti2013} studies stationary equilibria and establishes for contractive mean field games that a trend to equilibrium exists. 
	\citet{GomesConti2013} furthermore study an $N$-player game and the convergence of this game to the mean field game model. 
	Additionally, as in the diffusion-based models, the class of potential mean field games is introduced, which has a simpler cost structure and, thus, allows for deeper results. Namely, the costs split in two additive terms, one term depending on the current state and the current population distribution,  {which is furthermore the gradient of a convex function}, as well as one term depending on the current state and current action. 
	
	Several other authors discuss similar questions in models with more general individual dynamics, in particular the transition rates might not be fully controllable, but again assumptions were set which imply that there is a unique optimizer of the Hamiltonian:
	\citet[Chapter 7.2]{CarmonaMFG2018} provide a discussion of finite state mean field games, which is closely related to their exposition of standard mean field games models with continuous state space. They consider models in which transition rates depend on the current state, the current action and the population distribution and discuss existence and uniqueness results as well as a master equation. 
	\citet{BasnaConvergence2014} discusses mean field games were the dynamics are given by a non-linear Markov process with a generator that might additionally depend on the distribution of all other players and show under several assumptions that mean field equilibria are $\frac{1}{N}$-Nash equilibria for the corresponding $N$-player games. 
	
	\citet{CecchinProbabilistic2018} present a mean field game with the individual dynamics given by stochastic differential equation driven by a stationary Poisson random measure, where again a dependence on the current population distribution is possible. 
	They discuss existence  {(also in mixed strategies under mild continuity and boundedness assumptions)} and uniqueness of mean field equilibria and furthermore show that mean field equilibria in open-loop and feedback strategies are $\epsilon_N$-Nash equilibria for the corresponding $N$-player game. 
	\citet{CarmonaFinite} provide a mean field game model with the dynamics given by a continuous time Markov chain with a generator which might depend on the current distributions of the states as well as the actions of all players. 
	Using the semimartingale representation of continuous time Markov chains they again consider existence, uniqueness and the question when a mean field equilibrium is an approximate Nash equilibrium for the corresponding $N$-player game.
	
	The model of \citet{DoncelExplicit2016} (which we consider in this paper) does not require a unique optimizer of the Hamiltonian, but instead it is directly assumed that the action space is finite. 
	The dynamics of each individual player are given by a differential equation specifying the transition rates, which in turn depend on the individual's state and action as well as on the current population distribution. 
	Existence of dynamic mean field equilibria is shown and a discrete time $N$-player game is considered, for which it is shown that mean field equilibria are $\epsilon$-Nash equilibria.  
	Furthermore, the question of convergence is considered.  More precisely, given a sequence of strategies $(\pi^N)_{N \in \mathbb{N}}$ which are equilibria in the $N$-player game is there some sub-sequence converging to a mean field equilibrium? 
	The answer to this question is positive if one considers local strategies (which only depend on the current state and time), but negative if one considers Markov strategies (which also depend on the current distribution of all players). 
	The intuitive reason for this is that the ``tit-for-tat''-principle cannot be applied in the limit (see \citet{DoncelShort} for more details). 
	
	\subsection{Organization of the Paper}
	
	The structure of the paper is as follows: Section \ref{Section:TheModel} introduces the model in a probabilistic formulation.
	{Section \ref{Section:IndivControl} discusses the individual control problem.} 
	In Section \ref{Section:Existence} we show that for all models fitting into our framework a stationary mean field equilibrium in mixed strategies exists.
	Section \ref{Section:FixedPointProblem} first discusses the generally applicable cut criterion for our setting, then - given irreducibility of the generator - we propose a characterization of the set of all distributions of stationary mean field equilibria as the set of all fixed points of a suitable set-valued map which is completely determined by the dynamics and rewards given the deterministic strategies.
	Section \ref{Section:Examples} concludes the paper by showing how the presented tools can be applied to find stationary mean field equilibria in two examples.

	\section{The Model}
	\label{Section:TheModel}
	
	Let $\mathcal{S}=\{1, \ldots, S\}$ ($S>1$) be the set of possible states of each player and let $\mathcal{A}=\{1, \ldots, A\}$ be the set of possible actions. 
	With $\mathcal{P}(\mathcal{S})$ we denote the probability simplex over $\mathcal{S}$ and similarly for $\mathcal{P}(\mathcal{A})$. 
	A (mixed) strategy is a measurable function $\pi: \mathcal{S} \times [0,\infty) \rightarrow \mathcal{P}(\mathcal{A})$, $(i,t) \mapsto (\pi_{ia}(t))_{a \in \mathcal{A}}$ with the interpretation that $\pi_{ia}(t)$ is the probability that at time $t$ and in state $i$ the player chooses action $a$. 
	We say that a strategy $\pi=d:\mathcal{S} \times [0,\infty) \rightarrow \mathcal{P}(\mathcal{A})$ is \textit{deterministic} if it satisfies for all $t \ge 0$ and for all $i \in \mathcal{S}$ that there is an $a \in \mathcal{A}$ such that $d_{ia}(t)=1$ and $d_{ia'}=0$ for all $a' \in \mathcal{A} \setminus \{a\}$. 
	Throughout the presentation we often use the following equivalent representation, which is to represent a deterministic strategy as a function $d: \mathcal{S} \times [0,\infty) \rightarrow \mathcal{A}, (i,t) \mapsto d_i(t)$ with the interpretation that $d_i(t)=a$ states that at time $t$ in state $i$ action $a$ is chosen. 
	With $\Pi$ we denote the set of all (mixed) strategies and with $D$ the set of all deterministic strategies.
	
	The individual dynamics of each player given a flow of population distributions $m: [0, \infty) \rightarrow \mathcal{P}(\mathcal{S})$ and a strategy $\pi: \mathcal{S} \times [0, \infty) \rightarrow \mathcal{P}(\mathcal{A})$ are given as a Markov process $X^\pi(m)$  {on a given probability space $(\Omega, \mathcal{F}, \mathbb{P})$} with given initial distribution $x_0$ and infinitesimal generator given by the $Q(t)$-matrix $$\left( Q^\pi(m(t),t) \right)_{ij} = \sum_{a \in \mathcal{A}} Q_{ija}(m(t)) \pi_{ia}(t),$$  where for all $a \in \mathcal{A}$ and $m \in \mathcal{P}(\mathcal{S})$ the matrices $(Q_{\cdot \cdot a}(m))$ are conservative generators, that is $Q_{ija}(m) \ge 0$ for all $i, j \in \mathcal{S}$ with $i \neq j$ and $\sum_{j \in \mathcal{S}} Q_{ija}(m) =0$ for all $i \in \mathcal{S}$. 
	
	{Given the initial condition $x_0 \in \mathcal{P}(\mathcal{S})$,} the goal of each player is to maximize his expected reward, which is given by 
	\begin{equation}
	\label{valueFunction}
	V {_{x_0}}(\pi^0, m) = \int_0^\infty \left( \sum_{i \in \mathcal{S}} \sum_{a \in \mathcal{A}} x_i^{\pi_0} (t) r_{ia} (m(t)) \pi_{i,a}^0(t) \right) e^{-\beta t} \text{d}t,
	\end{equation} where $x_i(t)$ is the probability that the individual player is in state $i$ at time $t$, $r: \mathcal{S} \times \mathcal{A} \times \mathcal{P}(\mathcal{S}) \rightarrow \mathbb{R}$ is a real-valued function and $\beta \in (0,1)$ is the discount factor. 
	That is, for a fixed population distribution $m: [0,\infty) \mapsto \mathcal{P}(\mathcal{S})$ we face a Markov decision process with expected discounted reward criterion and time-inhomogeneous reward functions and transition rates.
	
	We will work under the following mild continuity assumption, which will ensure that there is indeed a Markov process with generator $Q^\pi$ (see \citet[Appendix B+C]{GuoCTMDP2009} for details):
	
	\begin{assumption}
		\label{assumption:continuous}
		For all $i, j \in \mathcal{S}$ and all $a \in \mathcal{A}$ the function $m \mapsto Q_{ija}(m)$ mapping from $\mathcal{P}(\mathcal{S})$ to $\mathbb{R}$ is Lipschitz-continuous in $m$ . 
		For all $i \in \mathcal{S}$ and all $a \in \mathcal{A}$ the function $m \mapsto r_{ia}(m)$ mapping from $\mathcal{P}(\mathcal{S})$ to $\mathbb{R}$ is continuous in $m$.
	\end{assumption}
	
	With these preparations, we can introduce the concept of dynamic mean field equilibria:
	
	\begin{definition}
		\label{definition:Dynamic}
		Given an initial distribution $m_0$, a \textit{mean field equilibrium} is a flow of population distributions $m: [0,\infty) \rightarrow \mathcal{P}(\mathcal{S})$ with $m(0)=m_0$ and a strategy $\pi: \mathcal{S} \times [0,\infty) \rightarrow \mathcal{P}(\mathcal{A})$ such that
		\begin{itemize}
			\item the distribution of the process $X^\pi(m)$ at time $t$ is given by $m(t)$
			\item $V {_{m_0}}(\pi, m) \ge V {_{m_0}}(\pi',m)$ for all $\pi' \in \Pi$		. 
		\end{itemize}
	\end{definition} 
	As in standard game theory, our concept of mean field equilibrium captures the intuitive idea that no player wants to deviate: Given that all players play according to strategy $\pi$ the population's distribution will be $m$. If an individual player now evaluates whether he wants to deviate from playing $\pi$ he asks whether there is a strategy that yields a higher payoff given $m$. 
	Due to the second condition this is not possible. 
	Therefore, we indeed face an equilibrium in the standard economic sense.
	
	\begin{remark}
		Using the Kolmogorov forward equation \citep[Proposition C.4]{GuoCTMDP2009}, we see that the first condition implies the analytic condition used in \citet{DoncelExplicit2016} to characterize mean field equilibria, which states that $m$ is solution of  $$\dot{m}_j(t) = \sum_{i \in \mathcal{S}} m_i(t) Q_{ij}^\pi(m(t),t) \quad \forall j \in \mathcal{S} $$  with initial condition $m(0)=m^0$.
	\end{remark}
	
	\begin{remark}
		{The definition of strategies we adopt here is unusual in classical game theory.
			However, in the setting of mean field games it is sensible.
			Indeed, given the initial state of the system and the strategy of the other players the behaviour of the system is fully determined. 
			Thus, it suffices for the individual agent to know the initial global state (see \citet{CainesHandbook}).}
	\end{remark}

	In order to define stationary mean field equilibria, we first introduce the notion of stationary strategies: A \textit{stationary strategy} is a map $\pi: \mathcal{S} \times [0,\infty) \rightarrow \mathcal{P}(\mathcal{A})$ such that $\pi_{ia}(t) = \pi_{ia}$ for all $t \ge 0$. 
	Again we denote by $\Pi^s$ the set of all stationary strategies and by $D^s$ the set of all deterministic stationary strategies.
	
	\begin{definition}
		\label{definition:Stationary}
		A stationary mean field equilibrium is given by a stationary strategy $\pi$ and a vector $m \in \mathcal{P}(\mathcal{S})$ such that
		\begin{itemize}
			\item the law of $X^\pi(m)$ at any point in time $t$ is given by $m$ 
			\item  {for any initial distribution $x_0 \in \mathcal{P}(\mathcal{S})$ we have} $V {_{x_0}}(\pi, m) \ge V {_{x_0}}(\pi', m)$ for all $\pi' \in \Pi$.
		\end{itemize}
	\end{definition}
	
	This notion is a sensible formalization of stationary equilibria: Given the strategy $\pi$ the population's distribution will be $m$ for all time points.
	{An individual agent at a given time point can be in any state, however, if he evaluates whether he wants to deviate from playing $\pi$, the second condition ensures that this is not beneficial for him. 
		Thus, he has no incentive to deviate from the equilibrium strategy $\pi$, which means that the population will indeed remain in the stationary equilibrium regime of playing $\pi$.}
	
	\begin{remark}
		{We remark that the matrix $Q_{ij}^\pi(m,t)$ does not depend on $t$ in this context, therefore, we write $Q^\pi_{ij}(m):= Q^\pi_{ij}(m,t)$.}
		Using this, we obtain that the first condition is equivalent to  $$0 = \sum_{i \in \mathcal{S}}  m_i Q^\pi_{ij}(m) \quad \forall j \in \mathcal{S}.$$
		Moreover, we remark that the second condition requires $\pi$ to  {be} optimal among all strategies, not only those that are stationary.
	\end{remark}
	
	\begin{remark}
		In contrast to the standard models where the assumptions usually imply that a unique optimal best response exists, the mean field equilibria we consider are not fully specified by the distribution, as it might happen that several actions are simultaneously optimal and induce the same distribution.
		However, the dynamic mean field equilibrium is fully specified by describing the equilibrium strategy, as one can show using standard techniques \citep[Theorem 10.XX]{WalterODE1998}  that there is at most one solution to the differential equation. 
		For the stationary mean field equilibrium, this again does not hold true, as it might happen that given a strategy there are multiple stationary distributions. 
		For this reason, we define mean field equilibria always as pairs of the equilibrium distribution and the equilibrium strategy.
	\end{remark}
	
	\begin{remark}
		{We remark that for non-trivial models (in the sense that there is not one action that maximizes the Hamiltonian for every population distribution) we always obtain population distributions at which several actions maximize the Hamiltonian:
			Since $Q(\cdot)$ and $r(\cdot)$ are continuous, also the Hamiltonian is continuous in $m$. 
			Therefore, if we fix the costate variables, the sets of population distributions in which a particular action is a maximizer of the Hamiltonian are closed. Since the action space is finite and the set of all population distribution vectors is connected, we obtain that if there is more than one action that maximizes the Hamiltonian for some population distribution, then the set of population distributions where several actions simultaneously maximize the Hamiltonian is non-empty. 
			This implies that for the case of finite action spaces the assumption that a unique maximizer of the Hamiltonian exists is violated in all interesting cases. 
			Thus, new methods for the analysis of these models are necessary.}
	\end{remark}
	
	\section{The Individual Control Problem}
	\label{Section:IndivControl}
	
	This section is devoted to the analysis of the individual control problem: We propose a simple approach to determine which strategies are optimal for a given population distribution
	{and show that optimal stationary strategies are convex combinations of particular deterministic stationary strategies.}
	
	We start by showing that given a stationary population distribution $m \in \mathcal{P}(\mathcal{S})$ the individual player's control problem is equivalent to a continuous time Markov decision process with expected discounted reward criterion (see \citet{GuoCTMDP2009} for a definition and general results).
	
	\begin{lemma}
		\label{preliminaries:CTMDPEquivalence}
		Let $m \in \mathcal{P}(\mathcal{S})$ be a population distribution.
		A Markovian randomized strategy $\pi$ is optimal in our model given $m$, i.e. achieves  {for all $x_0 \in \mathcal{P}(\mathcal{S})$} the maximum value of $V {_{x_0}}(\pi',m)$ among all strategies $\pi' \in \Pi$, if and only if it is discounted reward optimal for the continuous time Markov decision process with transition rates $Q_{ija}(m)$, rewards $r_{ia}(m)$ and discount factor $\beta$.
		{In particular, there is a stationary strategy $\pi \in \Pi^s$ that satisfies $V_{x_0}(\pi,m) \ge V_{x_0}(\pi',m)$ for all $x_0 \in \mathcal{P}(\mathcal{S})$ and all $\pi' \in \Pi$.}
	\end{lemma}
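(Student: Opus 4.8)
The plan is to notice that once the population distribution $m\in\mathcal{P}(\mathcal{S})$ is held fixed, the transition rates $Q_{ija}(m)$ and the reward rates $r_{ia}(m)$ no longer depend on time, so the individual player is confronted with a genuinely time-homogeneous problem; I would then identify the functional $V_{\cdot}(\,\cdot\,,m)$ with the expected discounted reward of the continuous time Markov decision process with state space $\mathcal{S}$, action space $\mathcal{A}$, transition rates $Q_{ija}(m)$, reward rates $r_{ia}(m)$ and discount rate $\beta$, in the sense of \citet{GuoCTMDP2009}. The regularity conditions required there are automatic in our setting: $\mathcal{S}$ is finite, so the rates are bounded, and $r_{ia}(\,\cdot\,)$ is continuous on the compact set $\mathcal{P}(\mathcal{S})$, hence bounded; moreover the randomized Markov policies of that CTMDP are exactly the elements of $\Pi$.

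First I would put the reward functional into probabilistic form. With $m$ frozen and writing $X:=X^\pi(m)$, the Kolmogorov forward equation already recalled in the remark following Definition~\ref{definition:Dynamic} gives $x_i^\pi(t)=\mathbb{P}(X(t)=i)$, so that
\[
\sum_{i\in\mathcal{S}}\sum_{a\in\mathcal{A}} x_i^\pi(t)\,r_{ia}(m)\,\pi_{ia}(t)=\mathbb{E}_{x_0}\!\Big[\sum_{a\in\mathcal{A}} r_{X(t)\,a}(m)\,\pi_{X(t)\,a}(t)\Big].
\]
Since this integrand is bounded by $\max_{i,a}|r_{ia}(m)|$ and $\int_0^\infty e^{-\beta t}\,\mathrm{d}t<\infty$, Fubini's theorem lets me exchange expectation and time integral, so
\[
V_{x_0}(\pi,m)=\mathbb{E}_{x_0}\!\Big[\int_0^\infty e^{-\beta t}\sum_{a\in\mathcal{A}} r_{X(t)\,a}(m)\,\pi_{X(t)\,a}(t)\,\mathrm{d}t\Big],
\]
which is precisely the expected $\beta$-discounted reward of the policy $\pi$ in the CTMDP above. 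Thus $V_{x_0}(\,\cdot\,,m)$ and the CTMDP objective coincide on $\Pi$ for every initial distribution $x_0$.

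Next I would invoke the standard discounted CTMDP theory: under the regularity just noted there is a deterministic stationary policy $d^\ast\in D^s\subseteq\Pi$ attaining the optimal value of the discounted CTMDP for every initial state, and hence, by linearity of $x_0\mapsto V_{x_0}(\pi,m)$ (decompose $x_0$ over the Dirac measures $\delta_i$), for every $x_0\in\mathcal{P}(\mathcal{S})$. Consequently $\sup_{\pi'\in\Pi}V_{x_0}(\pi',m)$ equals the CTMDP optimal value for all $x_0$, and in particular this supremum already agrees with the supremum over all (possibly history-dependent) policies. Therefore a strategy $\pi\in\Pi$ satisfies $V_{x_0}(\pi,m)\ge V_{x_0}(\pi',m)$ for all $x_0$ and all $\pi'\in\Pi$ if and only if it attains the optimal value of the discounted CTMDP for every $x_0$, which is exactly the definition of discounted reward optimality; the final ``in particular'' assertion is then the existence of $d^\ast$.

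The only real work here is bookkeeping rather than analysis: one must check that the notion of (Markov, randomized) strategy, the way randomization enters both the generator $Q^\pi(m)$ and the reward rate, the role of an initial distribution $x_0$ versus a single initial state in the CTMDP formulation, and the quantifier ``for all $x_0$'' match up between the two formalisms, together with the mild integrability needed for Fubini and for the forward-equation identity to hold on all of $[0,\infty)$. Once $m$ is held fixed there is no genuine difficulty left, since everything reduces to the well-developed theory of \citet{GuoCTMDP2009}.
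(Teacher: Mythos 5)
Your proposal is correct and follows essentially the same route as the paper: both reduce $V_{x_0}(\pi,m)$ to a convex combination $\sum_k x^0_k V_k^{\pi}(m)$ of the state-indexed CTMDP discounted rewards (you via linearity over Dirac measures and Fubini, the paper via the explicit transition-probability representation $x_i(t)=\sum_k x^0_k\,p^{\pi}(0,k,t,i)$), and both then conclude the equivalence and the existence of a stationary optimizer from the standard discounted CTMDP theory of \citet{GuoCTMDP2009}.
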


	\begin{proof}
		Assumption \ref{assumption:continuous} ensures that the value function is finite for every population distribution function and every individual strategy since $r_{ia}(\cdot)$ is, as a continuous function on a compact space, uniformly bounded. 
		Thus, we can rewrite the value function by using the representation $x_i(t) = \sum_{k \in \mathcal{S}} x_k^0 \cdot p^{\pi^0}(0,k,t,i)$:
		\allowdisplaybreaks
		\begin{align*}
		V {_{x_0}}(\pi^0,m) &= \int_0^\infty \sum_{i \in \mathcal{S}} \sum_{a \in \mathcal{A}} x_i(t) r_{i,a}(m) \pi_{i,a}^0(t) e^{-\beta t} \text{d}t \\
		&= \int_0^\infty \sum_{i \in \mathcal{S}} \sum_{a \in \mathcal{A}} \sum_{k \in \mathcal{S}} x_k^0 p^{\pi^0}(0,k,t,i) r_{i,a}(m) \pi_{i,a}^0(t) e^{-\beta t} \text{d}t \\
		&= \sum_{k \in \mathcal{S}} x_k^0 \int_0^\infty e^{-\beta t} \sum_{i \in \mathcal{S}} \sum_{a \in \mathcal{A}} r_{i,a}(m) \pi_{ia}^0(t) p^{\pi^0}(0,k,t,i) \text{d} t \\
		&= \sum_{ k \in \mathcal{S}} x_k^0 \int_0^\infty e^{-\beta t} \mathbb{E}_k^{\pi^0} \left[ r(x(t), \pi_t^0)\right] \text{d} t \\
		&= \sum_{k \in \mathcal{S}} x_k^0 V_k^{\pi^0}(m),
		\end{align*} where $V_k^{\pi^0}(m)$ is the expected discounted reward of a continuous time Markov decision process with expected discounted reward criterion with the above-mentioned rates and rewards when the initial state is $k$. Since a strategy $\pi$ is optimal for the continuous time Markov decision process if it maximizes all $V_k^\pi(m)$ simultaneously, we obtain the desired equivalence.
		{The last statement directly follows for the classical theory for Markov decision process, see for example \citet[Chapter 4]{GuoCTMDP2009}.}
	\end{proof}
	
	Now, we show that the set of all optimal stationary strategies is the convex hull of all deterministic optimal stationary strategies.
	
	\begin{theorem}
		\label{preliminaries:OptStrategiesConvexConti}
		Let $m \in \mathcal{P}(\mathcal{S})$.
		Write $$\mathcal{D}(m)=\{d: \mathcal{S} \rightarrow \mathcal{A}: d(i) \in O_i(m) \text{ for all } i \in \mathcal{S}\}$$ with $$O_i(m) = \text{argmax}_{a \in  {\mathcal{A}}}  \left\{ r_{ia}(m) + \sum_{j \in \mathcal{S}} Q_{ija}(m) V_j^\ast(m) \right\},$$  {where $V^\ast(m)$ is the value function of the associated Markov decision process.}
		Then $\mathcal{D}(m)$ is non-empty. Furthermore, a stationary strategy is optimal for our model given $m$ if and only if it is a convex combination of strategies from $\mathcal{D}(m)$.
	\end{theorem}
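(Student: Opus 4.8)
The plan is to reduce everything to the discounted optimality equation for the continuous time Markov decision process identified in Lemma \ref{preliminaries:CTMDPEquivalence}. Since $\mathcal{S}$ is finite, the transition rates $Q_{ija}(m)$ are bounded, so the standard theory for discounted continuous time Markov decision processes (see \citet{GuoCTMDP2009}) applies and $V^\ast(m)$ is the unique solution of
\begin{equation*}
\beta V_i^\ast(m) = \max_{a \in \mathcal{A}} \left\{ r_{ia}(m) + \sum_{j \in \mathcal{S}} Q_{ija}(m) V_j^\ast(m) \right\}, \qquad i \in \mathcal{S},
\end{equation*}
where the maximum is attained because $\mathcal{A}$ is finite. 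Non-emptiness of $\mathcal{D}(m)$ is then immediate: each $O_i(m)$ is non-empty, and one may choose $d(i) \in O_i(m)$ independently for each $i \in \mathcal{S}$.

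Next I would record the policy-evaluation equation: for a fixed stationary strategy $\pi \in \Pi^s$ the value $V^\pi(m)$ is the unique solution of the linear system
\begin{equation*}
\beta V_i^\pi(m) = \sum_{a \in \mathcal{A}} \pi_{ia} \left( r_{ia}(m) + \sum_{j \in \mathcal{S}} Q_{ija}(m) V_j^\pi(m) \right), \qquad i \in \mathcal{S},
\end{equation*}
uniqueness holding because $\beta I - Q^\pi(m)$ is invertible: $Q^\pi(m)$ is a conservative generator, hence has spectrum contained in $\{\operatorname{Re} z \le 0\}$, and $\beta > 0$. With these two facts the characterization splits into two implications. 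If $\pi$ is a convex combination of strategies from $\mathcal{D}(m)$, then $\pi_{ia} > 0$ forces $a \in O_i(m)$, so the bracketed term above equals $\beta V_i^\ast(m)$ for every $a$ in the support of $\pi_i$; hence $V^\ast(m)$ solves the policy-evaluation system for $\pi$, and uniqueness yields $V^\pi(m) = V^\ast(m)$, i.e.\ $\pi$ is optimal by Lemma \ref{preliminaries:CTMDPEquivalence}. Conversely, if $\pi$ is optimal then $V^\pi(m) = V^\ast(m)$, and substituting this into the policy-evaluation equation shows that the probability vector $(\pi_{ia})_{a \in \mathcal{A}}$ averages the numbers $r_{ia}(m) + \sum_{j} Q_{ija}(m) V_j^\ast(m)$ to their maximum over $a$; since an average of reals equals their maximum only if all mass sits on maximizers, this forces $\mathrm{supp}(\pi_i) \subseteq O_i(m)$ for every $i$.

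It then remains to identify the set of stationary strategies $\pi$ with $\mathrm{supp}(\pi_i) \subseteq O_i(m)$ for all $i$ with $\mathrm{conv}(\mathcal{D}(m))$. One inclusion is clear. For the other I would invoke the elementary fact that the convex hull of a finite product of finite sets equals the product of the convex hulls: such a $\pi$ lies in $\prod_{i \in \mathcal{S}} \mathcal{P}(O_i(m))$, whose extreme points are exactly the product strategies $d$ with $d(i) \in O_i(m)$, i.e.\ the elements of $\mathcal{D}(m)$, so $\pi$ is a convex combination of them.

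I expect the only genuine subtlety to be bookkeeping: pinning down the precise normalization of the discounted optimality equation and of the policy-evaluation equation as they appear in \citet{GuoCTMDP2009}, so that $O_i(m)$ really is the set of maximizers in the optimality equation, together with the invertibility of $\beta I - Q^\pi(m)$ underlying the two uniqueness arguments. The convexity step and the two implications themselves are routine.
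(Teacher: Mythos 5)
Your argument is correct, and it takes a genuinely different route from the paper on the mechanical level. The paper first uniformizes the continuous time Markov decision process into a discrete-time one (checking along the way that $O_i(m)$ coincides with the argmax of the uniformized optimality equation) and then cites the discrete-time machinery of Puterman (Theorems 6.1.1, 6.2.2, 6.2.5, 6.2.6 and Corollary 6.2.8) for the optimality equation, policy evaluation, and the verification step; its converse direction is phrased contrapositively, exhibiting a strict value loss $V^\pi(i)<V^\ast(i)$ at a state where a non-optimal action is played with positive probability. You instead stay entirely in continuous time: you take the discounted optimality equation $\beta V^\ast=\max_a\{r_a+Q_aV^\ast\}$ and the policy-evaluation system $\beta V^\pi=r^\pi+Q^\pi V^\pi$ as the two pillars, justify uniqueness of the latter by invertibility of $\beta I-Q^\pi(m)$ (correct, since a conservative generator has spectrum in the closed left half-plane), and run the ``an average equals the maximum only if all mass sits on maximizers'' argument to get $\mathrm{supp}(\pi_i)\subseteq O_i(m)$; the final identification of $\prod_i\mathcal{P}(O_i(m))$ with $\mathrm{conv}(\mathcal{D}(m))$ via the product weights $\prod_i\pi_{id(i)}$ is exactly the decomposition the paper also uses. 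What your route buys is a shorter, more self-contained proof that avoids the bookkeeping of translating between $Q,r$ and $\bar P,\bar r,\alpha$; what it costs is that you must cite (or prove) the continuous-time optimality-equation characterization of $V^\ast$ and the equivalence ``optimal iff $V^\pi=V^\ast$'' directly from \citet{GuoCTMDP2009}, whereas the paper's uniformization lets it lean on the more standard discrete-time references. Both are sound; no gap.
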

	
	\begin{proof}
		By the previous lemma we know that the individual's control problem is equivalent to the continuous time Markov decision process with expected discounted reward criterion with discount factor $\beta$, transition rates $Q_{ija}(m)$ and reward function $r_{ia}(m)$.
		Since we consider a finite state space, we obtain, using the uniformization procedure (\citet[Remark 6.1]{GuoCTMDP2009}, \citet{KakumanuUniformization1977}), an equivalent discrete time Markov decision process with expected discounted reward criterion. 
		Its discount factor is $\alpha = \frac{||Q(m)||}{\beta + ||Q(m)||}$.  {Writing} $||Q(m)|| =  \sup_{i \in \mathcal{S}, a \in \mathcal{A}} -Q_{iia}(m)$, the transition rates are given by  $$\bar{P}_{ija}(m) = \frac{Q_{ija}(m)}{||Q(m)||} + \delta_{ij}$$ and the reward functions are given by $$\bar{r}_{ia}(m) = \frac{r_{ia}(m)}{\beta + ||Q(m)||}.$$
		Simple computations yield that $$O_i(m) = \text{argmax}_{a \in \mathcal{A}} \left\{ \bar{r}_{ia}(m) + \alpha \bar{P}_{ija}(m) \tilde{V}^\ast_j(m) \right\}$$ with $\tilde{V}^\ast_j(m)$ being the value function of the discrete time Markov decision process, which is indeed equal to the value function of the continuous time Markov decision process (see \citet{KakumanuUniformization1977} for details; note however, that he does not adjust the rewards, which yields the proportional factor for the value functions in his setting).
		
		Now we can prove the statement for discrete time Markov decision processes relying on the rich theory developed for these problems (see \citet{PutermanMDP1994}):	
		We first note that the set $\mathcal{D}(m)$ in non-empty since $\mathcal{A}$ is finite.
		Moreover, by \citet[Corollary 6.2.8]{PutermanMDP1994}, any deterministic strategy in our set $\mathcal{D}(m)$ is indeed optimal.
		Enumerate $\mathcal{D}(m) = \{d_1, \ldots, d_n\}$ and let $\pi \in \Pi^s$ be a convex combination of strategies in $\mathcal{D}(m)$, that is $$\pi = \sum_{i=1}^n \lambda_i d_i \quad \text{with} \quad \lambda_i \ge 0 \quad \forall i \in \{1, \ldots, n\} \quad \text{and} \quad \sum_{i=1}^n \lambda_i = 1.$$
		By \citet[Theorem 6.1.1]{PutermanMDP1994} the reward function given a certain stationary strategy can be written as the unique solution of $v= \bar{r}^\pi(m) + \alpha \bar{P}^\pi(m) v$ with $$\bar{r}^\pi(m)_i = \sum_{a \in \mathcal{A}} \pi_{ia} \bar{r}_{ia}(m) \text{ and } (\bar{P}^\pi)_{ij} = \sum_{a \in \mathcal{A}} \pi_{ia} \bar{P}_{ija}(m).$$ 
		Noting that $\bar{r}^\pi$ and $\bar{P}^\pi$ are linear in $\pi$, we can rewrite the policy evaluation equation as  
		$$V^\pi = \bar{r}^\pi + \alpha \bar{P}^\pi V^\pi = \sum_{i=1}^n \lambda_i (\bar{r}^{d_i} + \alpha \bar{P}^{d_i} V ^\pi).$$ 
		Since for all $i \in \{1, \ldots, n\}$ the deterministic stationary strategy $d_i$ is optimal it follows that $V^\pi = V^\ast$ is the unique solution of the policy evaluation equation: $$r^\pi + \alpha P^\pi V^\ast = \sum_{i=1}^n \lambda_i (r^{d_i} + \alpha P^{d_i} V^\ast) = \sum_{i=1}^n \lambda_i V^\ast = V^\ast.$$ 
		By \citet[Theorem 6.2.2 and Theorem 6.2.5]{PutermanMDP1994}, which states that in our setting the unique solution of the optimality equation is $V^\ast$, and by \citet[Theorem 6.2.6]{PutermanMDP1994}, which states that a strategy is optimal if and only if its value function is a solution of the optimality equation, we obtain that the strategy $\pi$ is optimal.
		
		To show the converse implication we assume that $\pi$ is not a convex combination of deterministic strategies from $\mathcal{D}(m)$. One easily sees that there is still a representation of $\pi$ as a convex combination of arbitrary deterministic strategies by considering 
		$$\pi = \sum_{d \in D^s} \lambda_d d \quad \text{with} \quad \lambda_d = \pi_{1d(1)} \cdot \ldots  {\cdot } \pi_{Sd(S)}.$$	
		Moreover, any convex combination of deterministic strategies representing the strategy $\pi$ has a summand $d \notin \mathcal{D}(m)$ with positive weight. 
		This means that for the strategy $d$ there is a state $i \in \mathcal{S}$ and an action $\tilde{a} \in \mathcal{A} \setminus O_i(m)$ such that $d(i)=\tilde{a}$. This implies that also the stationary strategy $\pi$ chooses that action $\tilde{a}$ in state $i$ with positive probability, that is $\pi_{i \tilde{a}} >0$. This means for the $i$-th component of the strategy's expected discounted reward:
		\allowdisplaybreaks
		\begin{align*}
		V^\pi(i) &= \bar{r}^\pi(i) + \sum_{j \in S} \alpha (\bar{P}^\pi)_{ij} V^\pi (j) \\
		&= \sum_{a \in  {\mathcal{A}}} \pi_{ia} \left( \bar{r}_{ia}(m) + \sum_{j \in S} \alpha \bar{P}_{ija}(m) V^\pi (j) \right) \\
		&\le \sum_{a \in  {\mathcal{A}}} \pi_{ia} \left( \bar{r}_{ia}(m) + \sum_{j \in S} \alpha \bar{P}_{ija}(m) V^\ast (j) \right) \\
		&< \sum_{a \in  {\mathcal{A}}} \pi_{ia} \max_{a' \in A_i} \left\{ \bar{r}_{ia'}(m) + \sum_{j \in S} \alpha \bar{P}_{ija'}(m) V^\ast(j) \right\} \\
		&= \max_{a' \in  {\mathcal{A}}} \left\{ \bar{r}_{ia'}(m) + \sum_{j \in S} \alpha \bar{P}_{ija'}(m) V^\ast(j) \right\} = V^\ast(i).
		\end{align*} Note that the second lines follows from $V^\pi \le V^\ast$ and the third line follows from the fact that $\tilde{a}\notin O_i$ is chosen with positive probability $\pi_{i \tilde{a}}$.
		
		As now $V^\pi(i)< V^\ast(i)$ and by the finiteness of $\mathcal{S}$ and $\mathcal{A}$ an optimal strategy achieving value $V^\ast$ exists, it follows that $\pi$ is not optimal.  
	\end{proof} 
	
	We note that we reduced the problem of determining which of the infinitely many strategies are indeed optimal for a given $m \in \mathcal{P}(\mathcal{S})$ to the problem of determining which of the finitely many deterministic strategies are optimal.
	
	This theorem yields a basic guideline for finding all mean field equilibria:
	{For each point $m \in \mathcal{P}(\mathcal{S})$ determine the set $\mathcal{D}(m)$ of all optimal strategies. 
		Since there are only finitely many deterministic stationary strategies, this yields to a partition of $\mathcal{P}(\mathcal{S})$. 
		Let us write $\text{Opt}(A_1 \times \ldots \times A_S)$ for the set of all $m \in \mathcal{P}(\mathcal{S})$ such that $d \in \mathcal{D}(m)$ if and only if $d_i \in A_i$ for all $i \in \mathcal{S}$. 
		Thus, we then have to search in each of the sets $\text{Opt}(A_1 \times \ldots \times A_S)$ for fixed points of the dynamics given all those deterministic stationary strategies satisfying $d(i) \in A_i$ for all $i\in \mathcal{S}$.}
	
	Furthermore, the results allows us to prove 
	{that a game that is not trivial in the sense that is there are two different population distributions such that different actions are optimal for each of them, has a closed, non-empty set of points where infinitely many strategies are optimal.
		Thus, we indeed have to consider infinitely many (potentially different) fixed point problems in order to compute all stationary mean field equilibria.}
	{Indeed,} by the classical theory of Markov decision processes we know that those deterministic strategies are optimal that maximize the expected discounted reward $V^d(m)$. Noting that the expected discounted reward is given by $$V^d(m) = (\beta I - Q^d(m))^{-1} r^d(m)$$ and that $r^d(\cdot)$ and $Q^d(\cdot)$ are continuous, it follows that also $V^d(\cdot)$ is continuous. The set of all those points where a certain strategy $d$ is optimal is the preimage of $[0,\infty)^S$ under the continuous map $V^d(m) - \max_{d' \in D^s} V^{d'}(m)$ and thus closed. Whenever there are two strategies that are optimal for distinct population distributions we have two (or more) non-empty closed sets  {of points $m \in \mathcal{P}(\mathcal{S})$ for which a certain deterministic strategy is one (but possibly not the only) optimal strategy}. Since $\mathcal{P}(\mathcal{S})$ is itself closed and connected, there will be a non-empty, closed set for which at least two deterministic stationary strategies, and thus infinitely many stationary strategies are optimal.
	
	\section{Existence}
	\label{Section:Existence}
	
	{In Section \ref{Section:IndivControl} we proved that there is a stationary strategy that is optimal among all (also time-dependent) strategies. Moreover, we proved that a stationary strategy is optimal for $m \in \mathcal{P}(\mathcal{S})$ if and only if it is a convex combination from $\mathcal{D}(m)$, which is the set of all deterministic stationary strategies that are optimal.}

	Using this, we will prove that whenever the assumption \ref{assumption:continuous} holds there exists a stationary mean field equilibrium. 
	We will adapt the ideas presented in \citet{DoncelExplicit2016} to prove this. More precisely, we will show the existence of a fixed point of an associated best response map in the dynamics. This map maps to each point $m$ all the stationary points of $Q^\pi(m)$ given that $\pi$ is an optimal strategy for $m$. In contrast to the proof of the existence of dynamic equilibria presented in \citet{DoncelExplicit2016} we do not only rely on standard calculus arguments, but instead the proof crucially relies on our probabilistic representation of the problem.
	
	{We define the best response map $\phi: \mathcal{P}(\mathcal{S}) \rightarrow 2^{\mathcal{P}(\mathcal{S})}$, where $2^{\mathcal{P}(\mathcal{S})}$ denotes the power set of $\mathcal{P}(\mathcal{S})$, by setting 
		$$\phi(m) := \{x\in \mathcal{P}(\mathcal{S}) | \exists \pi \in \text{conv}(\mathcal{D}(m)) : 0 = x^T Q^\pi(m) \}.$$} 
	
	We will now show using Kakutani's fixed point theorem that this map has a fixed point and that each fixed point of this map induces a stationary mean field equilibrium:

	\begin{theorem}
		Given assumption \ref{assumption:continuous} there is a stationary mean field equilibrium.
	\end{theorem}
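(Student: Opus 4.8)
The plan is to check the hypotheses of Kakutani's fixed point theorem (in the Kakutani--Fan--Glicksberg form) for the best response map $\phi$, conclude that it has a fixed point $m^\ast \in \phi(m^\ast)$, and then verify that $m^\ast$ together with a stationary strategy $\pi^\ast$ witnessing $m^\ast \in \phi(m^\ast)$ is a stationary mean field equilibrium in the sense of Definition \ref{definition:Stationary}. Since $\mathcal{P}(\mathcal{S})$ is a nonempty compact convex subset of $\mathbb{R}^S$, it remains to show that $\phi$ has nonempty, convex and compact values and a closed graph. Nonemptiness is easy: by Theorem \ref{preliminaries:OptStrategiesConvexConti} the set $\mathcal{D}(m)$ is nonempty, so $\mathrm{conv}(\mathcal{D}(m))\neq\emptyset$, and for any $\pi$ in it the matrix $Q^\pi(m)$ is a conservative generator on the finite set $\mathcal{S}$, hence possesses at least one invariant distribution, which lies in $\phi(m)$. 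Compactness follows because $\phi(m)$ is the projection onto the first coordinate of the closed subset $\{(x,\pi):\pi\in\mathrm{conv}(\mathcal{D}(m)),\ x^TQ^\pi(m)=0\}$ of the compact set $\mathcal{P}(\mathcal{S})\times\mathrm{conv}(\mathcal{D}(m))$.

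The delicate point is convexity of $\phi(m)$, since the naive union $\bigcup_{\pi}\{x:x^TQ^\pi(m)=0\}$ is not obviously convex. The idea I would use is a flow-weighted mixing of strategies. Given $x^1,x^2\in\phi(m)$ witnessed by $\pi^1,\pi^2\in\mathrm{conv}(\mathcal{D}(m))$ and $\lambda\in[0,1]$, set $x^\lambda=\lambda x^1+(1-\lambda)x^2$ and define $\pi^\lambda$ through $x^\lambda_i\pi^\lambda_{ia}=\lambda x^1_i\pi^1_{ia}+(1-\lambda)x^2_i\pi^2_{ia}$, with an arbitrary admissible choice on states $i$ with $x^\lambda_i=0$. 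Here I would first record that $\mathrm{conv}(\mathcal{D}(m))$ equals the product of sub-simplices $\prod_i\mathcal{P}(O_i(m))$ — this is exactly the representation $\lambda_d=\pi_{1d(1)}\cdots\pi_{Sd(S)}$ used in the proof of Theorem \ref{preliminaries:OptStrategiesConvexConti} — so this class of strategies is closed under the above mixture, giving $\pi^\lambda\in\mathrm{conv}(\mathcal{D}(m))$. Since $(x^TQ^\pi(m))_j=\sum_{i,a}x_i\pi_{ia}Q_{ija}(m)$ depends on $(x,\pi)$ only through the products $x_i\pi_{ia}$, the defining relation yields $(x^\lambda)^TQ^{\pi^\lambda}(m)=\lambda(x^1)^TQ^{\pi^1}(m)+(1-\lambda)(x^2)^TQ^{\pi^2}(m)=0$, so $x^\lambda\in\phi(m)$.

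For the closed graph, suppose $m_n\to m$, $x_n\to x$ with $x_n\in\phi(m_n)$ witnessed by $\pi_n\in\mathrm{conv}(\mathcal{D}(m_n))$; by compactness of $\Pi^s$ pass to a subsequence with $\pi_n\to\pi$. Each $V^d(m)=(\beta I-Q^d(m))^{-1}r^d(m)$ is continuous in $m$ by Assumption \ref{assumption:continuous}, hence so is $V^\ast(m)=\max_{d\in D^s}V^d(m)$ and each map $m\mapsto r_{ia}(m)+\sum_jQ_{ija}(m)V_j^\ast(m)$; therefore the argmax correspondences $m\mapsto O_i(m)$ are upper hemicontinuous. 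Consequently, if $a\notin O_i(m)$ then $a\notin O_i(m_n)$ for all large $n$, so $\pi_{n,ia}=0$ and $\pi_{ia}=0$, which gives $\pi\in\mathrm{conv}(\mathcal{D}(m))$. Continuity of $(x,\pi,m)\mapsto x^TQ^\pi(m)$ then yields $x^TQ^\pi(m)=0$, so $x\in\phi(m)$; as $\phi$ takes values in the compact set $\mathcal{P}(\mathcal{S})$, this also establishes upper hemicontinuity.

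Kakutani's theorem now provides $m^\ast\in\phi(m^\ast)$, and we may choose $\pi^\ast\in\mathrm{conv}(\mathcal{D}(m^\ast))$ with $(m^\ast)^TQ^{\pi^\ast}(m^\ast)=0$. Because $\pi^\ast$ is stationary, the generator $Q^{\pi^\ast}(m^\ast)$ is time-homogeneous and $m^\ast$ is invariant for it, so by the Kolmogorov forward equation the law of $X^{\pi^\ast}(m^\ast)$ started from $m^\ast$ is $m^\ast$ for all $t$, giving the first condition of Definition \ref{definition:Stationary}. The second condition follows from Theorem \ref{preliminaries:OptStrategiesConvexConti}: since $\pi^\ast$ is a convex combination of strategies in $\mathcal{D}(m^\ast)$, it is optimal given $m^\ast$, i.e.\ $V_{x_0}(\pi^\ast,m^\ast)\ge V_{x_0}(\pi',m^\ast)$ for all $x_0\in\mathcal{P}(\mathcal{S})$ and all $\pi'\in\Pi$. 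Hence $(\pi^\ast,m^\ast)$ is a stationary mean field equilibrium. I expect the main obstacle to be the convexity of $\phi(m)$ via the flow-mixing construction; a secondary point needing care is the continuity of $m\mapsto V^\ast(m)$ and the ensuing upper hemicontinuity of $m\mapsto\mathcal{D}(m)$ used for the closed graph.
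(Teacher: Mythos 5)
Your proposal is correct and follows essentially the same route as the paper: Kakutani's theorem applied to the same best-response map $\phi$, with convexity established via the identical flow-mixing device $z_{ia}=x_i\pi_{ia}$ and the closed graph obtained from continuity of $V^d(m)=(\beta I-Q^d(m))^{-1}r^d(m)$ and $V^\ast$. The only cosmetic differences are that you derive compactness of the values by a projection argument rather than from the closed graph, and phrase the limit-strategy optimality via upper hemicontinuity of the argmax correspondence instead of the paper's "support set recurring infinitely often" argument.
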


	\begin{proof}
		We show that $\phi$ has a fixed point using Kakutani's fixed point theorem \citep[Corollary 15.3]{BorderFP1985}, since any such fixed point yields to a stationary mean field equilibrium: Indeed, for any fixed point $m$ we find a strategy $\pi\in \mathcal{D}(m)$ such that $0= m^TQ^\pi(m)$. Since by Lemma \ref{preliminaries:CTMDPEquivalence} and Theorem \ref{preliminaries:OptStrategiesConvexConti} we moreover have that $V_{x_0}(\pi,m) \ge V_{x_0}(\pi',m)$ for all $x_0 \in \mathcal{P}(\mathcal{S})$ and all $\pi' \in \Pi$ the pair $(m,\pi)$ constitutes a stationary mean field equilibrium.
		
		We first note that $\phi(m)$ is \textit{non-empty} for all $m \in \mathcal{P}(\mathcal{S})$. By Theorem \ref{preliminaries:OptStrategiesConvexConti} the set $\mathcal{D}(m)$ is non-empty. Since any continuous time Markov chain with finite state space has at least one stationary distribution there exists an $x \in \mathcal{P}(\mathcal{S})$ such that $0=x^T Q^\pi(m)$, which yields that $x \in \phi(m)$.
		
		Furthermore, for each $m \in \mathcal{P}(\mathcal{S})$ the set $\phi(m)$ is \textit{convex}: Let $x^1, x^2 \in \phi(m)$ be two distinct points. Then, by definition of $\phi$, we find two strategies $\pi^1, \pi^2 \in \text{conv}(\mathcal{D}(m))$ such that $$ 0 = \sum_{i \in \mathcal{S}} \sum_{a \in \mathcal{A}} x_i^1 Q_{ija}(m) \pi^1_{ia} \quad \text{and} \quad 0 = \sum_{i \in \mathcal{S}} \sum_{a \in \mathcal{A}} x_i^2 Q_{ija}(m) \pi^2_{ia}.$$ Define $z_{ia}^1:= x_i^1 \pi_{ia}^1$ and $z_{ia}^2= x_i^2 \pi_{ia}^2$, which satisfy $$ 0 = \sum_{i \in \mathcal{S}} \sum_{a \in \mathcal{A}}  Q_{ija}(m) z^1_{ia} \quad \text{and} \quad 0 = \sum_{i \in \mathcal{S}} \sum_{a \in \mathcal{A}} Q_{ija}(m) z^2_{ia}.$$ Now let $\alpha \in [0,1]$ be arbitrary and define $x^3 = \alpha x^1 + (1-\alpha) x^2$. Then $z^3 = \alpha z^1 + (1-\alpha) z^2$ satisfies $$0 = \sum_{i \in \mathcal{S}} \sum_{a \in \mathcal{A}}  Q_{ija}(m) z^3_{ia},$$ which means that 
		$$\pi_{ia}^3 := \begin{cases}
		0 & \text{if } x_{i}^3=0 \\
		z_{ia}^3/x_{i}^3 &\text{if } x_i^3 >0	
		\end{cases}$$ 
		satisfies $0 = (x^3)^T Q^{\pi^3}(m)$. It remains to verify that $\pi^3 \in \text{conv} (\mathcal{D}(m))$. For this we note that $\pi_{ia}^3>0$ of and only if $z_{ia}^3>0$, which in turn is equivalent to the requirement that $z_{ia}^1>0$ or $z_{ia}^2>0$. This can only happen if $\pi_{ia}^1>0$ or $\pi_{ia}^2>0$. Thus, since $\pi^1,\pi^2 \in \text{conv}(\mathcal{D}(m))$, also $\pi^3 \in \text{conv}(\mathcal{D}(m))$.
		
		We now verify that $\phi$ has a \textit{closed graph}, that is, that for any sequence $(m^n)_{n \in \mathbb{N}} \in  \mathcal{P}(\mathcal{S})^\mathbb{N}$ and $x^n \in \phi(m^n)$ for all $n \in \mathbb{N}$ with $\lim_{n \rightarrow \infty} m^n = m$ and $\lim_{n \rightarrow \infty} x^n = x$ we indeed have $x \in \phi(m)$:	
		Let $(m^n,x^n)_{n \in \mathbb{N}}$ be a converging sequence satisfying $x^n \in \phi(m^n)$ for all $n \in \mathbb{N}$. We denote its limit by $(m,x)$. 
		By definition of $\phi$ we find a sequence $\pi^n \in \text{conv}(\mathcal{D}(m^n))$ such that $0=x^nQ^{\pi^n}(m^n)$. 
		By compactness of $\Pi^s$ we find a converging subsequence $(\pi^{n_k})_{k \in \mathbb{N}}$ with limit $\pi\in \Pi^s$.	
		For any $k \in \mathbb{N}$ let $A_1^k \times \ldots \times A_S^k \subseteq \mathcal{A}^S$ be such that $\pi_{ia}^{n_k}>0$ for all $i \in \mathcal{S}, a \in A_i^k$ and $\pi_{ia}=0$ for all $i \in \mathcal{S}, a \notin A_i^k$. 
		Since $\mathcal{A}^S$ is finite we find a set $A_1 \times \ldots \times A_S$ that occurs infinitely often in the sequence $(A_1^k \times \ldots \times A_S^k)_{k \in \mathbb{N}}$. 
		From this we obtain that $\pi_{ia}=0$ for all $i \in \mathcal{S}$ and $a \notin A_i$.
		Moreover, since $ \pi^n \in \text{conv} (\mathcal{D}(m^n))$ we obtain that for all $l \in \mathbb{N}$ such that $A_1 \times \ldots \times A_S = A_1^l \times \ldots \times A_s^l$ we have $V^{d} (m^{n_l}) = V^\ast(m^{n_l})$ for all deterministic strategies satisfying $d(i) \in A_i$ for all $i \in \mathcal{S}$. By continuity of $V^d(\cdot)$ and $V^\ast(\cdot)$, we obtain that for all these strategies $V^d(m) = V^\ast(m)$. Thus, $\pi \in \text{conv}(\mathcal{D}(m))$.
		Furthermore, by continuity of $Q(\cdot)$, we obtain that
		$$ 0 = \sum_{i \in \mathcal{A}} \sum_{a \in \mathcal{A}} x_i^{n_k} Q_{ija}(m^{n_k}) \pi_{ia}^{n_k} \leftarrow  \sum_{i \in \mathcal{A}} \sum_{a \in \mathcal{A}} x_i Q_{ija}(m) \pi_{ia},$$ which shows that $x \in \phi(m)$.
		
		Using that $\mathcal{P}(\mathcal{A})$ is a compact metric space and that the graph of $\phi$ is closed, we obtain that the values $\phi(m)$ are \textit{compact}  as the limit of any sequence $x^n \in \phi(m)$ lies in $\phi(m)$.
		
		Now Kakutani's fixed point theorem \citep[Corollary 15.3]{BorderFP1985} yields a fixed point $m \in \phi(m)$, which proves the desired claim.
	\end{proof}

	\section{The Fixed Point Problem}
	\label{Section:FixedPointProblem}
	
	\subsection{The Cut Criterion}
	
	To solve the fixed point problem we have to determine the solutions of the equation $m^TQ^\pi(m)=0$ for all strategies $\pi$ that are optimal for some population distribution and then we have to check whether $\pi$ is indeed optimal for these solutions. In many settings this task is not simple (see Section \ref{Section:ExampleCorruption}). However, often a cut criterion similar to the one used for Markov chains is useful, although it is just a reformulation of the balance equation $m^TQ^\pi(m)=0$ (see  \citet[Lemma 1.4]{KellyCut1979} for a description of the criterion for standard continuous time Markov chains).
	The criterion states that if we partition the state space of the Markov chain into two sets, then the probability flow from one set to the other has to equal the probability flow from this other set to the first.
	
	The particular use of the criterion is that in most models that have been consider so far there has always been a set of states for which the dynamics to and from this set cannot be influenced by the player by choosing a particular strategy. This means that any mean field equilibrium irrespective of the chosen strategy has to satisfy certain equations coming from the cut criterion, which could be obtained from the standard balance equations $Q^\pi(m)m=0$ only by sensible rearrangements. In Section \ref{Section:ExampleCorruption} we will show that the criterion indeed simplifies the search for fixed points.
	
	\begin{theorem}
		\label{stationary:CutCriterion}
		Let $\pi$ be a stationary strategy and let $\mathcal{T} \subseteq \mathcal{S}$. Then any stationary population distribution satisfies
		$$\sum_{j \in \mathcal{T}} \sum_{i \in \mathcal{S} \setminus \mathcal{T}} m_i Q_{ij}^\pi(m) = \sum_{j \in \mathcal{T}} \sum_{i \in \mathcal{S}\setminus \mathcal{T}} m_j Q_{ji}^\pi(m).$$
	\end{theorem}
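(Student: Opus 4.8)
The plan is to work directly from the balance equation characterizing stationary population distributions, namely $\sum_{i \in \mathcal{S}} m_i Q^\pi_{ij}(m) = 0$ for all $j \in \mathcal{S}$ (as recorded in the remark following Definition \ref{definition:Stationary}), and to rearrange it by splitting sums over $\mathcal{T}$ and $\mathcal{S} \setminus \mathcal{T}$. Before doing so I would record the elementary but essential fact that $Q^\pi(m)$ is itself a conservative generator: since $Q^\pi_{ij}(m) = \sum_{a \in \mathcal{A}} Q_{ija}(m)\pi_{ia}$ and each $(Q_{\cdot\cdot a}(m))$ is conservative, we get $\sum_{j \in \mathcal{S}} Q^\pi_{ij}(m) = \sum_{a \in \mathcal{A}} \pi_{ia} \sum_{j \in \mathcal{S}} Q_{ija}(m) = 0$ for every $i \in \mathcal{S}$. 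This row-sum identity is what lets us trade flow within $\mathcal{T}$ for flow leaving $\mathcal{T}$.

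The main computation proceeds as follows. First I would sum the balance equation over $j \in \mathcal{T}$, obtaining $\sum_{j \in \mathcal{T}} \sum_{i \in \mathcal{S}} m_i Q^\pi_{ij}(m) = 0$, and then partition the inner index set to write this as
$$\sum_{j \in \mathcal{T}} \sum_{i \in \mathcal{T}} m_i Q^\pi_{ij}(m) + \sum_{j \in \mathcal{T}} \sum_{i \in \mathcal{S}\setminus\mathcal{T}} m_i Q^\pi_{ij}(m) = 0.$$
For the first (within-$\mathcal{T}$) term I would use conservativity in the form $\sum_{j \in \mathcal{T}} Q^\pi_{ij}(m) = -\sum_{j \in \mathcal{S}\setminus\mathcal{T}} Q^\pi_{ij}(m)$, valid for each $i \in \mathcal{T}$, to rewrite it as $-\sum_{i \in \mathcal{T}} \sum_{j \in \mathcal{S}\setminus\mathcal{T}} m_i Q^\pi_{ij}(m)$. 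Relabelling the summation indices $i \leftrightarrow j$ turns this into $-\sum_{j \in \mathcal{T}} \sum_{i \in \mathcal{S}\setminus\mathcal{T}} m_j Q^\pi_{ji}(m)$, and substituting back yields exactly the claimed identity.

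Since every step is algebraic, there is no genuine obstacle here; the statement is, as the text says, just a rearrangement of the balance equations. The only points requiring a little care are (i) checking that $Q^\pi(m)$ inherits conservativity so that the row-sum trick is licit, and (ii) being careful about which index plays the role of the ``source'' versus the ``target'' state when relabelling, so that the final left-hand side genuinely counts flow from $\mathcal{S}\setminus\mathcal{T}$ into $\mathcal{T}$ and the right-hand side flow from $\mathcal{T}$ into $\mathcal{S}\setminus\mathcal{T}$. I would also remark that the argument applies verbatim to any $m \in \mathcal{P}(\mathcal{S})$ solving $m^T Q^\pi(m) = 0$, not merely to equilibrium distributions, which is why the statement is phrased for arbitrary stationary population distributions.
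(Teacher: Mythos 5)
Your proof is correct and follows essentially the same route as the paper's: both arguments combine the balance equation $\sum_{i} m_i Q^\pi_{ij}(m)=0$ with the conservativity of $Q^\pi(m)$ (inherited from the $Q_{\cdot\cdot a}(m)$ by linearity in $\pi$), sum over $j\in\mathcal{T}$, and cancel the within-$\mathcal{T}$ flow. The only cosmetic difference is that the paper removes the $\mathcal{T}\times\mathcal{T}$ block by subtracting the trivially symmetric identity $\sum_{j\in\mathcal{T}}\sum_{i\in\mathcal{T}} m_i Q^\pi_{ij}(m)=\sum_{j\in\mathcal{T}}\sum_{i\in\mathcal{T}} m_j Q^\pi_{ji}(m)$, whereas you eliminate it via the row-sum identity and a relabelling of indices.
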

	
	\begin{proof}
		The stationarity condition reads for all $j \in \mathcal{S}$ $$0 = \sum_{i \in \mathcal{S}} m_i Q_{ij}^\pi(m),$$ furthermore, since $Q^\pi(m)$ is conservative, we have for all $j \in \mathcal{S}$ $$m_j \sum_{i \in \mathcal{S}} Q^\pi_{ji}(m)=0.$$ 
		This yields for all $j \in \mathcal{S}$ $$\sum_{i \in \mathcal{S}} m_i Q_{ij}^\pi(m) = m_j \sum_{i \in \mathcal{S}} Q^\pi_{ji}(m).$$
		Summing this identity over all $j \in \mathcal{T}$  yields $$\sum_{j \in \mathcal{T}} \sum_{i \in \mathcal{S}} m_i Q_{ij}^\pi(m) = \sum_{j \in \mathcal{T}} \sum_{i \in \mathcal{S}} m_j Q_{ji}^\pi(m).$$ 
		Subtracting the identity $$\sum_{j \in \mathcal{T}} \sum_{i \in \mathcal{T}} m_i Q_{ij}^\pi(m) = \sum_{j \in \mathcal{T}} \sum_{i \in \mathcal{T}} m_j Q_{ji}^\pi(m)$$ yields the desired result.
	\end{proof}
	
	\subsection{Mean Field Equilibria are Fixed Points of a Specific Map}
	
	This section is devoted to proving an explicit characterization of $\phi(m)$, which has been introduced in Section \ref{Section:Existence}, in terms of the deterministic maps $x^d(\cdot)$ for those strategies $d$ that are optimal for $m$. In order to show this we will need irreducibility of $Q^\pi(m)$ for all strategies $\pi \in \Pi^s$.  Note that it is sufficient to verify irreducibility for all deterministic strategies $d \in D^s$ since any stationary strategy $\pi$ is a convex combination of deterministic strategies and thus $Q^\pi = \sum_{d \in D ^s} \lambda_d Q^d$ is also irreducible.
	
	The main consequence of $Q^\pi(m)$ being irreducible is that there is a unique stationary distribution of the continuous time Markov chain (CTMC) with generator $Q^\pi(m)$ (see \citet[Corollary I.4.6 and Theorem I.4.7]{DurrettEssentials1999}, \citet[Theorem 3.5.1]{NorrisMarkov1997}). This observation allows us to formulate the main theorem, which we will prove in the rest of the section:
	
	\begin{theorem}
		\label{stationary:CharacterizationOfPhiM}
		Let $m \in \mathcal{P}(\mathcal{S})$ such that $Q^d(m)$ is irreducible for all $d \in D^s$.
		Let, furthermore,  $\mathcal{D}(m) = \{d_1, \ldots, d_n\}$ be the set of all deterministic optimal strategies for $m$. 
		Then $$\phi(m) = \text{conv} (x^{d_1}(m), \ldots, x^{d_n}(m)),$$ with $x=x^{d_k}(m)$ being the unique solution of $0 = \sum_{i \in \mathcal{S}} \sum_{a \in \mathcal{A}} x_i Q_{ija}(m) d_{ia}^k$.
	\end{theorem}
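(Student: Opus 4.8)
The plan is to rewrite $\phi(m)$ as the linear image of an explicit occupation–measure polytope and then to show that the extreme points of that polytope are exactly the deterministic optimal strategies; the second step is where irreducibility enters decisively. Since $\mathcal{D}(m)=\{d:\ d(i)\in O_i(m)\ \forall i\}$ is a product set, $\text{conv}(\mathcal{D}(m))$ equals the set of $\pi\in\Pi^s$ with $\pi_i\in\mathcal{P}(O_i(m))$ for every $i$ (equivalently $\pi_{ia}=0$ whenever $a\notin O_i(m)$). I would introduce the set
\[
Z:=\Bigl\{z=(z_{ia})_{i\in\mathcal{S},a\in\mathcal{A}}:\ z_{ia}\ge 0,\ \textstyle\sum_{i\in\mathcal{S}}\sum_{a\in\mathcal{A}}Q_{ija}(m)z_{ia}=0\ \forall j,\ \sum_{i,a}z_{ia}=1,\ z_{ia}=0\ \forall a\notin O_i(m)\Bigr\},
\]
which is a non-empty, bounded polytope, and the linear projection $p(z)_i:=\sum_a z_{ia}$. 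The claim is $\phi(m)=p(Z)$: if $x\in\phi(m)$ with witness $\pi\in\text{conv}(\mathcal{D}(m))$, then $z_{ia}:=x_i\pi_{ia}$ lies in $Z$ and $p(z)=x$; conversely, for $z\in Z$ put $\pi_{ia}:=z_{ia}/p(z)_i$ on rows with $p(z)_i>0$ and $\pi_i$ arbitrary in $\mathcal{P}(O_i(m))$ on the remaining rows — then $\pi\in\text{conv}(\mathcal{D}(m))$, and since $x_i\pi_{ia}=z_{ia}$ holds in all cases (on rows with $p(z)_i=0$ one has $z_{ia}=0$), a short computation gives $p(z)^TQ^\pi(m)=0$, so $p(z)\in\phi(m)$. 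Hence $\phi(m)=p(Z)$ is itself a polytope, and its extreme points are images under $p$ of extreme points of $Z$.

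Next I would dispose of the easy inclusion. Each $x^{d_k}(m)$ satisfies $(x^{d_k}(m))^TQ^{d_k}(m)=0$ with $d_k\in\text{conv}(\mathcal{D}(m))$, hence $x^{d_k}(m)\in\phi(m)$; since $\phi(m)$ is convex (established in Section~\ref{Section:Existence}), $\text{conv}(x^{d_1}(m),\dots,x^{d_n}(m))\subseteq\phi(m)$. By the previous paragraph it then remains to prove $\phi(m)\subseteq\text{conv}(x^{d_1}(m),\dots,x^{d_n}(m))$, and for this it suffices to show that every extreme point of $Z$ equals the occupation measure $z^{d}$ concentrated on $\{(i,d(i)):i\in\mathcal{S}\}$ with $z^{d}_{i,d(i)}=x^{d}(m)_i$ for some $d\in\mathcal{D}(m)$ (recall $\mathcal{D}(m)$ is non-empty by Theorem~\ref{preliminaries:OptStrategiesConvexConti}).

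The crux is this extreme-point identification. Let $z$ be an extreme point of $Z$ and $\pi$ the strategy reconstructed from it as above. Since $\pi\in\text{conv}(\mathcal{D}(m))\subseteq\Pi^s$, the generator $Q^\pi(m)$ is irreducible, so its stationary distribution is unique and strictly positive; as $p(z)$ is a stationary distribution of $Q^\pi(m)$, we get $p(z)_i>0$ for all $i$, so $z$ has at least one positive entry in each row. On the other hand, suppose $z$ had more than $S$ positive entries in total. Restricted to the support of $z$, the homogeneous system formed by the balance equations $\sum_{i,a}Q_{ija}(m)v_{ia}=0$ ($j\in\mathcal{S}$) together with $\sum_{i,a}v_{ia}=0$ has rank at most $S$, because summing the balance equations over $j$ gives $0$ (each $Q_{\cdot\cdot a}(m)$ is conservative); having strictly more unknowns than this bound, it admits a nonzero solution $v$, which — extended by $0$ off the support — yields $z\pm\epsilon v\in Z$ for small $\epsilon>0$, contradicting extremality. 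Hence $z$ has exactly one positive entry per row, i.e.\ $z=z^{d}$ for the deterministic strategy $d$ with $d(i)\in O_i(m)$, so $d\in\mathcal{D}(m)$, and by irreducibility $p(z^{d})$ is the unique stationary distribution of $Q^d(m)$, namely $x^{d}(m)$. Therefore $\phi(m)=p(Z)=\text{conv}(p(\operatorname{ext}Z))\subseteq\text{conv}(x^{d_1}(m),\dots,x^{d_n}(m))$, which together with the easy inclusion gives the claim. The main obstacle is precisely this last step: the perturbation argument relies on the rank bound coming from conservativeness of the generators, and the positivity of each row of an extreme point genuinely uses irreducibility — without it, $Z$ could have extreme points supported on proper subsets of states and the clean convex-hull description would fail; one must also be slightly careful that the strategy reconstructed from $z$ lies in $\text{conv}(\mathcal{D}(m))$ even on rows where $p(z)_i=0$, which is why the support constraint $z_{ia}=0$ for $a\notin O_i(m)$ is built into $Z$.
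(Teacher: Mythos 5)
Your proof is correct, but it takes a genuinely different route from the paper's. The paper works directly with a closed-form expression for the stationary distribution: it replaces the last row of $(Q^\pi(m))^T$ by $(1,\ldots,1)$, inverts the resulting matrix via Cramer's rule, exploits multilinearity of the determinant in the columns to expand both $\det(\tilde{Q}^\pi(m))$ and the relevant minors over the products $\pi_{1a_1}\cdots\pi_{Sa_S}$, and thereby exhibits $x^\pi(m)$ as $\sum_{(a_1,\ldots,a_S)}\lambda_{(a_1,\ldots,a_S)}\, x^{d^{(a_1,\ldots,a_S)}}(m)$ with explicit coefficients $\lambda_{(a_1,\ldots,a_S)}=\pi_{1a_1}\cdots\pi_{Sa_S}\det(\tilde{Q}^{d^{(a_1,\ldots,a_S)}}(m))/\det(\tilde{Q}^\pi(m))$; non-negativity of these coefficients is the content of the separate uniform-sign Lemma \ref{stationary:Lemma:SignofDeterminant}, which in turn rests on the M-matrix analysis of the minor in Lemma \ref{stationary:Lemma:Streichungsmatrix}. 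You instead linearize the bilinear constraint $x^TQ^\pi(m)=0$ by passing to occupation measures $z_{ia}=x_i\pi_{ia}$, identify $\phi(m)$ as the linear image of the polytope $Z$, and classify the extreme points of $Z$ by the standard rank-plus-perturbation argument (conservativeness bounds the rank of the equality system by $S$, irreducibility forces one positive entry per row); this is the classical linear-programming formulation of MDPs transplanted to the stationarity constraint. Your route buys a shorter, determinant-free proof that needs neither of the two auxiliary lemmas, only existence, uniqueness and strict positivity of the stationary distribution of an irreducible finite-state generator; what it gives up is the explicit formula for the convex weights expressing $x^\pi(m)$ in terms of the $x^{d}(m)$, which the paper's computation produces as a by-product. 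Both arguments invoke irreducibility at the analogous place — you for the positivity of every row marginal of an extreme point (without which $Z$ could have extreme points supported on proper subsets of $\mathcal{S}$), the paper for the invertibility of $\tilde{Q}^\pi(m)$ and the constant sign of its determinant.
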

	
	The proof of this theorem basically relies on the idea to characterize the stationary distribution $x^\pi(m)$ of the CTMC with irreducible generator $Q^\pi(m)$ by a closed form expression and to show thereafter that $x^\pi(m)$ is a convex combination of $(x^d(m))_{d \in D^s}$. 
	In order to follow this programme we have to prove several properties of the generator matrix $Q$ starting with the following lemma regarding the structural properties of an irreducible, conservative generator $Q \in \mathbb{R}^{S \times S}$, more precisely regarding the minor $Q_{SS}'$, which arises from $Q$ by deleting the last row and column:
	
	\begin{lemma}
		\label{stationary:Lemma:Streichungsmatrix}
		Let $Q \in \mathbb{R}^{S \times S}$ be an irreducible, conservative generator matrix. 
		Then all eigenvalues of the minor $Q_{SS}'$ have negative real part. 
		Consequently $Q_{SS}'$ has full rank and $$\text{sign} (\det(Q_{SS}'))= (-1)^{S+1}.$$
	\end{lemma}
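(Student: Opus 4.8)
The plan is to realize the minor $Q_{SS}'$ as the generator of the $Q$-chain restricted to $\mathcal S\setminus\{S\}$ and killed on hitting $S$, and to exploit that irreducibility and finiteness force this killing to occur almost surely. Write $P:=Q_{SS}'$; deleting the last row and column of the conservative generator $Q$ leaves a matrix with nonnegative off-diagonal entries, diagonal entries $Q_{ii}$, and row sums $\sum_{j=1}^{S-1}Q_{ij}=-Q_{iS}\le 0$. By standard continuous-time Markov chain theory on a finite state space (see \citet{NorrisMarkov1997}), $e^{tP}$ is the sub-Markovian semigroup with entries $(e^{tP})_{ij}=\mathbb P_i(X^Q_t=j,\ t<\tau_S)$, where $X^Q$ is the chain with generator $Q$ and $\tau_S$ its hitting time of state $S$; in particular $\sum_{j=1}^{S-1}(e^{tP})_{ij}=\mathbb P_i(\tau_S>t)$. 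I would then invoke irreducibility: from any $i\neq S$ there is a positive-rate path to $S$, so the (non-explosive, finite) chain $X^Q$ reaches $S$ in finite time almost surely and $\mathbb P_i(\tau_S>t)\downarrow 0$ by monotone convergence. Hence the maximum-row-sum norm obeys $\|e^{tP}\|_\infty=\max_i\mathbb P_i(\tau_S>t)\to 0$, so there is $t_0>0$ with $\|e^{t_0P}\|_\infty<1$.

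Since $\|\cdot\|_\infty$ is a submultiplicative matrix norm, the spectral radius of $e^{t_0P}$ is then strictly below $1$; and passing to a Jordan form of $P$ shows that the eigenvalues of $e^{t_0P}$ are exactly the numbers $e^{t_0\lambda}$ with $\lambda$ ranging over the eigenvalues of $P$. Thus every eigenvalue $\lambda$ of $P$ satisfies $|e^{t_0\lambda}|<1$, i.e. $\mathrm{Re}(\lambda)<0$, which is the first assertion; in particular $0$ is not an eigenvalue, so $Q_{SS}'$ has full rank. For the determinant, $\det Q_{SS}'$ is the product of the $S-1$ eigenvalues counted with multiplicity; the non-real ones occur in conjugate pairs, each contributing a positive factor $|\lambda|^2$, and the remaining $k$ eigenvalues are negative reals with $k\equiv S-1\pmod 2$, whence $\mathrm{sign}(\det Q_{SS}')=(-1)^{S-1}=(-1)^{S+1}$.

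The only slightly delicate ingredients --- the identification of $e^{tP}$ with the killed semigroup and the fact that $\tau_S<\infty$ almost surely --- are standard for finite irreducible chains, so I do not anticipate a genuine obstacle; one should merely be careful that it is irreducibility, rather than mere finiteness, that guarantees $\mathbb P_i(\tau_S<\infty)=1$, and that the claim indeed fails without it. If a purely linear-algebraic route is preferred, the vector $v=(\mathbb E_i[\tau_S])_{i=1}^{S-1}$ is strictly positive and solves the mean-hitting-time equations $Pv=-\mathbf 1<0$, one of the classical characterizations of $-P$ being a nonsingular M-matrix; this again yields $\mathrm{Re}(\mathrm{spec}(P))<0$, and the determinant sign then follows from $\det Q_{SS}'=(-1)^{S-1}\det(-P)$ together with $\det(-P)>0$.
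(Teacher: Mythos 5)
Your argument is correct, and your primary route is genuinely different from the paper's. The paper stays entirely on the linear-algebra side: it constructs by hand, via an induction along the strongly connected transition graph, a strictly positive vector $x$ with $-Q_{SS}'x>0$ (pointwise), and then invokes the characterization of non-singular $M$-matrices from Berman and Plemmons to conclude that all eigenvalues of $-Q_{SS}'$ have positive real part; the determinant sign then follows from the conjugate-pair parity argument exactly as in your last step. You instead read $Q_{SS}'$ as the generator of the chain killed on hitting $S$, use irreducibility and finiteness to get $\mathbb{P}_i(\tau_S>t)\to 0$, deduce $\rho(e^{t_0Q_{SS}'})<1$ from the sub-stochastic row sums, and transfer this to the spectrum via the spectral mapping theorem. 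Your route is shorter and more conceptual, at the price of importing the (standard but not proved in the paper) identification of $e^{tQ_{SS}'}$ with the killed semigroup and the a.s.\ finiteness of $\tau_S$; the paper's route is self-contained modulo the $M$-matrix citation and needs no probabilistic apparatus, but pays for it with a fairly intricate inductive construction. Your closing remark is worth emphasizing: the mean-hitting-time vector $v=(\mathbb{E}_i[\tau_S])_i$, which is strictly positive and satisfies $-Q_{SS}'v=\mathbf{1}>0$, is precisely the kind of certificate the paper's induction laboriously produces, so your second route is essentially the paper's proof with a one-line replacement for its longest step.
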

	
	The technical proof can be found in the appendix.
	
	Noting the last column is the negative sum of all other columns we obtain the following corollary:
	\begin{corollary}
		\label{stationary:RankOfQ}
		Let $Q \in \mathbb{R}^{S\times S}$ be an irreducible, conservative generator matrix. Then the rank of $Q$ is $S-1$.
	\end{corollary}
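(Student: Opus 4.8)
The plan is to sandwich the rank between the two bounds that are already available. For the upper bound I would use conservativity of $Q$: each row sums to zero, i.e. $\sum_{j \in \mathcal{S}} Q_{ij} = 0$ for all $i \in \mathcal{S}$, which says precisely that the $S$-th column of $Q$ equals the negative of the sum of the first $S-1$ columns. Hence the columns of $Q$ are linearly dependent, so $\operatorname{rank}(Q) \le S-1$.

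For the matching lower bound I would invoke Lemma \ref{stationary:Lemma:Streichungsmatrix}, which says that the minor $Q_{SS}'$ — obtained from $Q$ by deleting the last row and column — has full rank $S-1$ (all its eigenvalues having negative real part, it is in particular nonsingular). Since $Q_{SS}'$ is a submatrix of $Q$, we get $\operatorname{rank}(Q) \ge \operatorname{rank}(Q_{SS}') = S-1$. Combining the two inequalities gives $\operatorname{rank}(Q) = S-1$.

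There is essentially no obstacle here, since all the work has been done in Lemma \ref{stationary:Lemma:Streichungsmatrix}; the only point worth stating carefully is that conservativity is a condition on the \emph{rows} of $Q$, so the resulting linear dependence is among the \emph{columns} (the last column being the negative sum of the others), and it is exactly this column relation, paired with the nonvanishing of the principal minor $Q_{SS}'$, that forces the rank to be exactly $S-1$ rather than merely at most $S-1$.
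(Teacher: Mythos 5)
Your proof is correct and matches the paper's (very terse) argument exactly: the paper likewise notes that conservativity makes the last column the negative sum of the others (giving $\operatorname{rank}(Q)\le S-1$) and relies on Lemma \ref{stationary:Lemma:Streichungsmatrix} for the nonsingularity of $Q_{SS}'$ (giving $\operatorname{rank}(Q)\ge S-1$). Nothing further is needed.
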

	
	With these two results, we can now explicitly characterize the stationary distribution given a stationary strategy $\pi$ and a population distribution $m$:
	
	\begin{lemma}
		\label{stationary:ExplicitRepresentation}
		Let $\pi \in \Pi^s$ be a stationary strategy and $m \in \mathcal{P}(\mathcal{S})$ such that $Q^\pi(m)$ is irreducible.
		Let $\tilde{Q}^\pi(m)$ be the transpose $(Q^\pi(m))^T$ where the last row is replaced by $(1, \ldots, 1)$.
		Then we have that the unique stationary distribution $x^\pi(m)$ is given by \begin{equation}
		\label{stationary:ExplicitRepresentationFormula}
		x^\pi(m) = (\tilde{Q}^\pi(m))^{-1} \cdot (0,\ldots, 0,1)^T.
		\end{equation}  
		Furthermore, we can write
		$$x^\pi(m)_i = \frac{1}{\det(\tilde{Q}^\pi(m))} (-1)^{S+i} \det (Q^\pi(m))_{iS}',$$ with $$\det(\tilde{Q}^\pi(m)) = \sum_{k=1}^S (-1)^{S+k} \det((Q^\pi(m))_{kS}'.$$
	\end{lemma}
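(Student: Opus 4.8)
The plan is to realize $x^\pi(m)$ as a suitably normalized vector of cofactors of $(Q^\pi(m))^T$, to identify the normalizing constant with $\det \tilde Q^\pi(m)$, and then to read off the two displayed formulas; the matrix identity \eqref{stationary:ExplicitRepresentationFormula} is then immediate from the normalization. Write $Q := Q^\pi(m)$. Since $Q$ is irreducible and conservative, Corollary \ref{stationary:RankOfQ} gives $\operatorname{rank}(Q) = \operatorname{rank}(Q^T) = S-1$, so $\ker(Q^T)$ is one-dimensional; since a finite-state continuous time Markov chain always admits a stationary distribution, this kernel is spanned by $x^\pi(m)$, which is moreover the unique element of it with components summing to $1$.

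The core step is the adjugate identity. Because $\operatorname{rank}(Q^T) < S$ we have $\det(Q^T) = 0$, so $Q^T \operatorname{adj}(Q^T) = \det(Q^T)\, I = 0$; hence every column of $\operatorname{adj}(Q^T)$ lies in $\ker(Q^T) = \operatorname{span}(x^\pi(m))$. Let $v$ denote the last column of $\operatorname{adj}(Q^T)$, so that $v = c\, x^\pi(m)$ for some scalar $c$. By the definition of the adjugate, $v_j = (-1)^{S+j}$ times the determinant of $Q^T$ with its $S$-th row and $j$-th column deleted; since deleting the $S$-th row (respectively the $j$-th column) of $Q^T$ amounts to deleting the $S$-th column (respectively the $j$-th row) of $Q$, and determinants are invariant under transposition, this yields $v_j = (-1)^{S+j}\det\big((Q^\pi(m))'_{jS}\big)$. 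In particular $v_S = \det\big((Q^\pi(m))'_{SS}\big) \neq 0$ by Lemma \ref{stationary:Lemma:Streichungsmatrix}, so $c \neq 0$.

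It remains to identify $c$ with $\det \tilde Q^\pi(m)$. Expanding $\det \tilde Q^\pi(m)$ along its last row $(1,\dots,1)$, the cofactor of the $k$-th entry is $(-1)^{S+k}$ times the determinant of $\tilde Q^\pi(m)$ with its $S$-th row and $k$-th column deleted; since the replaced last row is deleted in forming this minor, it coincides with the determinant of $Q^T$ with its $S$-th row and $k$-th column deleted, i.e. with $v_k$. Hence $\det \tilde Q^\pi(m) = \sum_{k=1}^S v_k = c \sum_{k=1}^S x^\pi(m)_k = c$, which is both the claimed formula for $\det \tilde Q^\pi(m)$ and the statement that $\det \tilde Q^\pi(m) = c \neq 0$, so $\tilde Q^\pi(m)$ is invertible. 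Dividing $v = c\, x^\pi(m)$ by $c$ gives the componentwise formula for $x^\pi(m)_i$. Finally, $0 = x^\pi(m)^T Q^\pi(m)$ together with $\sum_i x^\pi(m)_i = 1$ says precisely that $\tilde Q^\pi(m)\, x^\pi(m) = (0,\dots,0,1)^T$, and multiplying by $(\tilde Q^\pi(m))^{-1}$ yields \eqref{stationary:ExplicitRepresentationFormula}.

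The main obstacle is essentially careful bookkeeping: tracking the transpositions, the row/column deletions, and the induced sign $(-1)^{S+j}$ consistently when translating minors of $Q^T$ into the paper's minors $(Q^\pi(m))'_{jS}$ of $Q$. Beyond that, the argument rests on two structural inputs already available — the one-dimensionality of $\ker(Q^T)$ from Corollary \ref{stationary:RankOfQ} (hence from irreducibility), and the non-vanishing of the deleted minor from Lemma \ref{stationary:Lemma:Streichungsmatrix}, which is exactly what prevents $v$ (and therefore $c = \det \tilde Q^\pi(m)$) from being zero.
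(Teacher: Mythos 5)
Your proof is correct, and it reaches the same formulas by a route that is organized differently from the paper's. The paper first replaces the last (redundant) equation of $0=x^TQ$ by the normalization to get $\tilde Q^\pi(m)x=e_S$, then proves invertibility of $\tilde Q^\pi(m)$ up front by a direct kernel argument ($y^T\tilde Q^\pi(m)=0$ forces $y_S=0$ after pairing with the stationary distribution, and then $y_1=\dots=y_{S-1}=0$ from the full rank of $(Q^\pi(m))'_{SS}$ supplied by Lemma \ref{stationary:Lemma:Streichungsmatrix}), and finally obtains the componentwise formula from Cramer's rule plus Laplace expansion along the last row. You instead work with the singular matrix $Q^T$ itself: the adjugate identity $Q^T\operatorname{adj}(Q^T)=0$ together with $\operatorname{rank}(Q)=S-1$ (Corollary \ref{stationary:RankOfQ}, which the paper states but does not actually invoke in its proof of this lemma) shows that the last column of $\operatorname{adj}(Q^T)$ is a scalar multiple $c\,x^\pi(m)$ of the stationary distribution, and the Laplace expansion identifies $c=\det\tilde Q^\pi(m)$, with $c\neq 0$ following from $\det\big((Q^\pi(m))'_{SS}\big)\neq 0$, again Lemma \ref{stationary:Lemma:Streichungsmatrix}. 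The net effect is that invertibility of $\tilde Q^\pi(m)$ is a byproduct of your computation rather than a prerequisite for it; the combinatorial content (the signed minors $(-1)^{S+i}\det\big((Q^\pi(m))'_{iS}\big)$ and the expansion of $\det\tilde Q^\pi(m)$ along the row of ones) is the same in both arguments, and both ultimately rest on the same two structural inputs. Your bookkeeping of transpositions, deleted rows and columns, and signs is consistent throughout, so there is no gap.
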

	
	\begin{proof}
		The stationary distribution of our process is uniquely determined by \citep[Theorem II.4.2]{AsmussenQueues2003}
		\begin{equation}
		\label{Stationary:EquationsForStatDist}
		0 = \sum_{i \in \mathcal{S}} x_i^\pi (m) Q_{ij}^\pi(m) \quad \text{ for all } j \in \mathcal{S} \quad \text{and} \quad \sum_{i \in \mathcal{S}} x^\pi_i(m)=1.
		\end{equation}
		Since the last equation of the system $0=x^\pi(m)^TQ^\pi(m)$ is the negative sum of all other equations, we obtain that the system \eqref{Stationary:EquationsForStatDist} is equivalent to $$\begin{pmatrix}
		0 \\ \vdots \\ 0 \\ 1
		\end{pmatrix} = \begin{pmatrix}
		Q^\pi_{11}(m) & \ldots & Q^\pi_{1,S-1}(m) & 1 \\
		\vdots & & \vdots & \vdots \\
		Q^\pi_{S,1}(m) & & Q_{S,S-1}^\pi(m) & 1
		\end{pmatrix}^T x^\pi(m),$$ which by definition of $\tilde{Q}^\pi$ is $(0, \ldots, 0,1)^T = \tilde{Q}^\pi(m) x^\pi(m)$.
		
		We now show that the rank of the matrix $\tilde{Q}^\pi(m)$ is $S$, as in this case we can invert the matrix.	
		As in \citet[p.137-139]{ResnickAdventures} we rely on the existence of the stationary distribution given $Q^\pi(m)$.	
		In order to show that $\tilde{Q}^\pi(m)$ has full rank, we show that $y^T\tilde{Q}^\pi(m)  =0$ implies that $y=0$. 
		For the stationary distribution $x^\pi(m)$ given $Q^\pi(m)$ we have 
		$$0=\left(y^T\tilde{Q}^\pi(m)\right) x^\pi(m) = y^T \left( \tilde{Q}^\pi(m) x^\pi(m) \right) = y^T (0, \ldots, 1)^T = y_S.$$
		Thus,
		$$0 =y^T\tilde{Q}^\pi(m) = y^T \begin{pmatrix}
		Q_{11}^\pi(m) & \ldots & Q_{S1}^\pi(m) \\
		\vdots & \ldots & \vdots \\
		Q_{1,S-1}^\pi(m) & \ldots & Q_{S,S-1}^\pi(m) \\
		1 & \ldots & 1
		\end{pmatrix}$$ implies that $$0 = (y_1, \ldots, y_{S-1}) (Q^\pi(m))_{SS}'.$$
		Since by Lemma \ref{stationary:Lemma:Streichungsmatrix} the matrix $(Q^\pi(m))_{SS}'$ has full rank we obtain that $y_1=\ldots =y_{S-1}=0$, which proves that $\tilde{Q}^\pi(m)$ has full rank.
		
		The last part of the statement simply follows from Cramer's rule together with the Laplace expansion of $\det(\tilde{Q}^\pi(m))$ along the last line $$\det(\tilde{Q}^\pi(m)) = \sum_{k=1}^S (-1)^{S+k} \cdot 1 \cdot \det((\tilde{Q}^\pi(m))_{Sk}')$$ and the observation that $$\det((\tilde{Q}^\pi(m))_{Sk}') = \det((\det(Q^\pi)^T)'_{Sk}) = \det((Q^\pi)_{kS}'),$$ as $\tilde{Q}^\pi(m)$ differs from $Q^\pi(m)^T$ only in the $S$-th row.
	\end{proof}
	
	In order to establish the desired result on characterizing the convex set $\phi(m)$ solely in terms of transition rates and rewards for deterministic strategies, one final preparation has to be made: We have to show that the determinant of $\tilde{Q}^\pi(m)$ has uniform sign over all $\pi \in \Pi^s$: For this write $d^{(a_1, \ldots, a_S)}$ for the deterministic strategy satisfying $d(i) =a_i$. Then it holds that 
	$$\tilde{Q}^\pi(m) := \sum_{(a_1, \ldots, a_S) \in \mathcal{A}^S} \left( \prod_{i=1}^{S} \pi_{ia_i} \right) \tilde{Q}^{d^{(a_1, \ldots,a_S)}}(m).$$ Now a simple application of the intermediate value theorem yields the desired result:
	
	\begin{lemma}
		\label{stationary:Lemma:SignofDeterminant}
		Let $m \in \mathcal{P}(\mathcal{S})$ be a population distribution such that $Q^d(m)$ is irreducible for all $d \in D^s$. Then $\det(\tilde{Q}^\pi(m))$ has uniform sign over $\pi \in \Pi^s$.
	\end{lemma}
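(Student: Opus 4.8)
The plan is to deduce the claim from three facts: that $\det(\tilde{Q}^\pi(m))$ never vanishes on $\Pi^s$, that it depends continuously on $\pi$, and that $\Pi^s$ is connected; the intermediate value theorem then forces the sign to be constant.

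First I would check that $\det(\tilde{Q}^\pi(m)) \neq 0$ for \emph{every} $\pi \in \Pi^s$, not merely for deterministic $\pi$. By the remark preceding Theorem \ref{stationary:CharacterizationOfPhiM}, irreducibility of $Q^d(m)$ for all $d \in D^s$ already implies irreducibility of $Q^\pi(m)$ for all $\pi \in \Pi^s$, since $Q^\pi(m) = \sum_{d \in D^s} \lambda_d Q^d(m)$ is a convex combination and thus retains every off-diagonal connection present in the $Q^d(m)$. Lemma \ref{stationary:ExplicitRepresentation} then gives that $\tilde{Q}^\pi(m)$ has full rank $S$, i.e.\ $\det(\tilde{Q}^\pi(m)) \neq 0$, for all $\pi \in \Pi^s$.

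Next I would record that $\pi \mapsto \det(\tilde{Q}^\pi(m))$ is continuous on $\Pi^s$. This is immediate from the displayed identity $\tilde{Q}^\pi(m) = \sum_{(a_1, \ldots, a_S) \in \mathcal{A}^S} \bigl( \prod_{i=1}^{S} \pi_{ia_i} \bigr)\, \tilde{Q}^{d^{(a_1, \ldots,a_S)}}(m)$: each entry of $\tilde{Q}^\pi(m)$, and hence the determinant, is a polynomial in the coordinates of $\pi$. Finally, since $\Pi^s$ is (identified with) $\mathcal{P}(\mathcal{A})^S$, it is convex and therefore path-connected; a continuous real-valued function on $\Pi^s$ that is nowhere zero cannot take both a positive and a negative value, for otherwise restricting it to the line segment joining such a pair of points and applying the intermediate value theorem would produce a zero. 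Hence $\det(\tilde{Q}^\pi(m))$ has the same sign for all $\pi \in \Pi^s$. The only step requiring attention is the first one — ensuring irreducibility, and thus non-vanishing of the determinant, on all of $\Pi^s$ rather than just on $D^s$; once that is in place the argument is routine.
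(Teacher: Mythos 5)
Your argument is the same as the paper's: continuity of $\pi \mapsto \det(\tilde{Q}^\pi(m))$, non-vanishing via Lemma \ref{stationary:ExplicitRepresentation}, and the intermediate value theorem on the connected set $\Pi^s$ (the paper handles the irreducibility-for-all-$\pi$ point in the remark preceding Theorem \ref{stationary:CharacterizationOfPhiM}, exactly as you note). The proof is correct and matches the paper's approach.
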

	
	\begin{proof}
		We note that the map $\pi \mapsto \tilde{Q}^\pi(m)$, which ranges from $\Pi^s$ to $\mathbb{R}^{S \times S}$ is continuous. Since the determinant is also a continuous function, we see that $\pi \mapsto \det (\tilde{Q}^\pi(m))$ is a continuous function. By Lemma \ref{stationary:ExplicitRepresentation} we have that $\det(\tilde{Q}^\pi(m)) \neq 0$ for all $\pi \in \Pi^s$. If there would be a strategy $\pi_1$ and a strategy $\pi_2$ such that $\det (\tilde{Q}^{\pi_1}(m))<0$ and $\det (\tilde{Q}^{\pi_2}(m)) >0$, then, by the intermediate value theorem, there would be a $\pi \in \Pi^s$ such that $\det (\tilde{Q}^\pi(m))=0$, which would be a contradiction.
	\end{proof}
	
	With all these preparations, we can now prove the characterization result stated in the beginning of the section:
	
	\begin{proof}[Proof of Theorem \ref{stationary:CharacterizationOfPhiM}]
		For readability we suppress the dependence of $Q$ and $x$ on $m$.
		
		Let $\pi = \sum_{(a_1, \ldots, a_n) \in \mathcal{A}^S} \lambda_{(a_1, \ldots, a_S)} d^{(a_1, \ldots, a_S)}$.
		Note that $\lambda_{(a_1, \ldots, a_S)}$ is zero for all non-optimal strategies by Theorem \ref{preliminaries:OptStrategiesConvexConti}.
		With $\hat{Q}$ being the matrix $Q^T$ without the last row, we can now write $\tilde{Q}^\pi$ as follows:
		$$\tilde{Q}^\pi = \begin{pmatrix}
		\sum_{a_1 \in \mathcal{A}} \pi_{1a_1} \hat{Q}_{1,\cdot ,a_1} & \ldots & \sum_{a_S \in \mathcal{S}} \pi_{Sa_S} \hat{Q}_{S,\cdot, a_S} \\
		1 & \ldots & 1 \\
		\end{pmatrix}.$$
		As the determinant is linear in columns and we have $\sum_{a_i \in \mathcal{A}} \pi_{ia_i} = 1$ for all $i \in \mathcal{S}$ we obtain
		\begin{align}
		\label{stationary:equation:determinantTildeQ}
		\det (\tilde{Q}^\pi) 
		&= \sum_{a_1 \in \mathcal{A}} \pi_{1a_1} \det \begin{pmatrix}
		\hat{Q}_{1,\cdot, a_1} & \sum_{a_2 \in \mathcal{A}} \pi_{2a_2} \hat{Q}_{2,\cdot, a_2} & \ldots &  \sum_{a_S \in \mathcal{S}} \pi_{Sa_S} \hat{Q}_{S,\cdot ,a_S} \\ \notag
		1 & 1 & \ldots & 1 
		\end{pmatrix} \\ \notag
		&= \ldots  \\ \notag
		&= \sum_{(a_1, \ldots, a_S)\in \mathcal{A}^S} \pi_{1a_1} \ldots \pi_{Sa_S} \det \begin{pmatrix}
		\hat{Q}_{1,\cdot, a_1} &\ldots & \hat{Q}_{S, \cdot, a_S} \\ 1& \ldots & 1 
		\end{pmatrix} \\
		&= \sum_{(a_1, \ldots, a_S)\in \mathcal{A}^S} \pi_{1a_1} \ldots \pi_{Sa_S} \det (\tilde{Q}^{d^{(a_1, \ldots, a_S)}})
		\end{align} 
		Similarly, we obtain using that  $$\left( Q^{d^{(a_1, \ldots, a_{i-1},\tilde{a}_i, a_{i+1}, \ldots, a_S )}} \right)_{Si}' = \left( Q^{d^{(a_1, \ldots, a_S)}}\right)_{Si}'$$ holds for all $a_1, \ldots, a_S, \tilde{a}_i \in \mathcal{A}$, that
		\begin{align*}
		&\det (\tilde{Q}^\pi)_{Si}' \\
		&= \sum_{(a_1, \ldots, a_{i-1},a_{i+1}, \ldots a_S) \in \mathcal{A}^{S-1}} \pi_{1a_1} \cdot \ldots  {\cdot \pi_{i-1,a_{i-1}} \cdot \pi_{i+1,a_{i+1}} \cdot \ldots} \cdot \pi_{Sa_S} \det (\tilde{Q}^{d^{(a_1, \ldots, a_{i-1}, \tilde{a}_i, a_{i+1}, \ldots, a_S)}})_{Si}'
		\end{align*} for all $\tilde{a}_{i} \in \mathcal{A}$.
		This implies
		\begin{align*}
		(x^\pi)_i 
		&= \frac{1}{\det \tilde{Q}^\pi} (-1)^{S+i} \det ((\tilde{Q}^\pi)_{Si}') \\
		&= \frac{1}{\det \tilde{Q}^\pi} (-1)^{S+i}  \sum_{a_i \in \mathcal{A}} \pi_{ia_i} \det ((\tilde{Q}^\pi)_{Si}') \\
		&= \frac{1}{\det \tilde{Q}^\pi} (-1)^{S+i}  \sum_{(a_1, \ldots, a_S) \in \mathcal{A}^{S}} \pi_{1a_1} \cdot \ldots \cdot \pi_{Sa_S} \det (\tilde{Q}^{d^{(a_1, \ldots, a_S)}})_{Si}' \\
		&=  \frac{1}{\det \tilde{Q}^\pi} \sum_{(a_1, \ldots,  a_S) \in \mathcal{A}^{S}} \pi_{1a_1} \cdot \ldots \cdot \pi_{Sa_S} (-1)^{S+i} \det (\tilde{Q}^{d^{(a_1, \ldots, a_S)}})_{Si}' \\
		&= \sum_{(a_1, \ldots,  a_S) \in \mathcal{A}^{S}} \frac{\pi_{1a_1} \cdot \ldots \cdot \pi_{Sa_S} \det (\tilde{Q}^{d^{(a_1, \ldots, a_S)}})}{\det(\tilde{Q}^\pi)} \cdot (x^{d^{a_1, \ldots, a_S}})_i.
		\end{align*} Thus, we see that $x^\pi$ is a linear combination of $x^{d^{(a_1, \ldots, a_S)}}$ for any stationary strategy $\pi$ and the coefficients are given by 
		$$\lambda_{(a_1, \ldots, a_S)} = \frac{\pi_{1a_1} \cdot \ldots \cdot \pi_{Sa_S} \det (\tilde{Q}^{d^{a_1, \ldots, a_S}})}{\det(\tilde{Q}^\pi)}.$$
		From \eqref{stationary:equation:determinantTildeQ} we obtain that 
		$$\sum_{(a_1, \ldots, a_S) \in \mathcal{A}^\mathcal{S}} \lambda_{(a_1, \ldots, a_S)} = 1.$$ 
		By Lemma \ref{stationary:Lemma:SignofDeterminant} we note that the signs of the determinants $\tilde{Q}^\pi$ and $\tilde{Q}^{\pi'}$ are the same for any two strategies $\pi, \pi'$. Thus, as $\pi_{ia} \ge 0$ for all $i\in \mathcal{S}, a \in \mathcal{A}$ we obtain that $\lambda_{(a_1, \ldots, a_S)} \ge 0$ for all $(a_1,\ldots, a_S) \in \mathcal{A}^S$. 
		To conclude, writing $\mathcal{D}(m)= \{d^1, \ldots, d^n\}$, every point in $\phi(m)$ is a convex combination of $x^{d_1}, \ldots, x^{d_n}$.  {Moreover, any convex combination of $x^{d^1}, \ldots, x^{d^n}$ is the stationary point given a strategy $\pi \in \text{conv}(\mathcal{D}(m))$, which by Theorem \ref{preliminaries:OptStrategiesConvexConti} yiels that these points lie in $\phi(m)$.}
	\end{proof}
	
	{Thus, in order to find all mean field equilibria it is sufficient to follow the following programme: First, compute for all sets $A_1 \times \ldots, \times A_S \subseteq \mathcal{A}^S$ the set $\text{Opt}(A_1 \times \ldots A_S)$, which collects all those points $m \in \mathcal{P}(\mathcal{S})$ for which $\mathcal{D}(m) = \{d \in D^S: d(i) \in A_i\}$. Second, find all fixed points of the map $(\text{conv}\{x^{(a_1, \ldots, a_S)}(\cdot): (a_1, \ldots, a_S) \in A_1 \times \ldots \times A_S \}$, that lie in $\text{Opt}(A_1 \times \ldots A_S)$. Writing $FP(f)$ for the set of all fixed points of the map $f$, we obtain the following result:}
	
	\begin{theorem}
		\label{Stationary:ParitionResult}
		Assume that there is a set $\mathcal{T} \subseteq \mathcal{P}(\mathcal{S})$ such that for all $m \in \mathcal{T}$ and all $\pi \in \Pi^s$ the matrix $Q^\pi(m)$ is irreducible. 
		Then the set of all distributions lying in $\mathcal{T}$ induced by some stationary mean field equilibrium  is given by 
		\begin{align*}
		\bigcup_{A_1 \times \ldots \times A_S \subseteq \mathcal{A}^S} &\Big(FP (\text{conv}\{x^{(a_1, \ldots, a_S)}(\cdot): (a_1, \ldots, a_S) \in A_1 \times \ldots \times A_S \}  \\
		& \qquad \cap \text{Opt}(A_1 \times \ldots \times A_S) \Big).
		\end{align*}
	\end{theorem}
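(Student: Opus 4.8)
The plan is to show that $m \in \mathcal{T}$ is the distribution of some stationary mean field equilibrium if and only if it lies in one of the sets in the stated union. By the discussion preceding Theorem~\ref{stationary:CharacterizationOfPhiM}, a pair $(m,\pi)$ is a stationary mean field equilibrium if and only if $m \in \phi(m)$, so it suffices to characterize the fixed points of $\phi$ restricted to $\mathcal{T}$. The key structural facts are already in hand: since every $Q^\pi(m)$ with $m \in \mathcal{T}$ is irreducible, Theorem~\ref{stationary:CharacterizationOfPhiM} applies at every such $m$ and gives $\phi(m) = \text{conv}(x^{d_1}(m), \ldots, x^{d_n}(m))$, where $\{d_1,\ldots,d_n\} = \mathcal{D}(m)$ is the set of optimal deterministic stationary strategies at $m$.

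First I would fix an arbitrary tuple $A_1 \times \ldots \times A_S \subseteq \mathcal{A}^S$ and analyze the set $\text{Opt}(A_1 \times \ldots \times A_S)$, which by definition collects exactly those $m$ with $\mathcal{D}(m) = \{d \in D^s : d(i) \in A_i \text{ for all } i\}$. On this set, Theorem~\ref{stationary:CharacterizationOfPhiM} says $\phi(m)$ equals $\text{conv}\{x^{(a_1,\ldots,a_S)}(m) : (a_1,\ldots,a_S) \in A_1 \times \ldots \times A_S\}$, so $m \in \phi(m)$ precisely when $m$ is a fixed point of the set-valued map $\text{conv}\{x^{(a_1,\ldots,a_S)}(\cdot) : (a_1,\ldots,a_S) \in A_1 \times \ldots \times A_S\}$. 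Hence the distributions of stationary mean field equilibria lying in $\text{Opt}(A_1 \times \ldots \times A_S)$ are exactly the points of $FP(\text{conv}\{x^{(a_1,\ldots,a_S)}(\cdot)\}) \cap \text{Opt}(A_1 \times \ldots \times A_S)$.

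Second I would observe that the sets $\text{Opt}(A_1 \times \ldots \times A_S)$, as $A_1 \times \ldots \times A_S$ ranges over all subsets of $\mathcal{A}^S$, form a partition of $\mathcal{P}(\mathcal{S})$: every $m$ has a well-defined nonempty set $\mathcal{D}(m)$ (nonemptiness by Theorem~\ref{preliminaries:OptStrategiesConvexConti}), but not every $\mathcal{D}(m)$ need be a full product set, so strictly speaking $\text{Opt}(A_1 \times \ldots \times A_S)$ is empty unless $\{d : d(i) \in A_i\}$ is exactly the optimal set; nonetheless the nonempty ones partition $\mathcal{P}(\mathcal{S})$, since each $m$ lies in $\text{Opt}(A_1 \times \ldots \times A_S)$ for the unique choice $A_i = O_i(m)$. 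Intersecting with $\mathcal{T}$ and taking the union over all tuples therefore recovers precisely the set of all equilibrium distributions lying in $\mathcal{T}$, which is the claim. The only mild subtlety — and the step I would be most careful about — is making sure the decomposition into $\text{Opt}$-sets is genuinely exhaustive on $\mathcal{T}$, i.e. that for $m \in \mathcal{T}$ the optimal set $\mathcal{D}(m)$ coincides with the full product $\prod_i O_i(m)$; this is exactly the content of Theorem~\ref{preliminaries:OptStrategiesConvexConti}, which characterizes $\mathcal{D}(m)$ as all $d$ with $d(i) \in O_i(m)$, so no extra argument is needed beyond citing it carefully.
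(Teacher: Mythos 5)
Your proposal is correct and follows essentially the same route as the paper, which states this theorem as an immediate consequence of Theorem \ref{stationary:CharacterizationOfPhiM} together with the observation that the nonempty sets $\text{Opt}(A_1 \times \ldots \times A_S)$ partition $\mathcal{P}(\mathcal{S})$ (each $m$ lying in the one with $A_i = O_i(m)$) and that $m$ is an equilibrium distribution if and only if $m \in \phi(m)$. Your added care about restricting to $\mathcal{T}$, where the maps $x^d(\cdot)$ are well defined, is a harmless refinement of the paper's implicit argument.
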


	In case of constant dynamics, that is $Q_{ija}(m) = Q_{ija}$ for all $m \in \mathcal{P}(\mathcal{S})$ the second step of this programme is simple, since the maps $x^d(\cdot)$ are constant with value $(\tilde{Q}^d)^{-1} \cdot (0,\ldots, 0,1)^T$ and this implies that the unique fixed point is $(\tilde{Q}^d)^{-1} \cdot (0,\ldots, 0,1)^T$. Thus, we can characterize the set of all mean field equilibria as explicitly by only computing the optimality sets and the stationary points given $Q^d$ as follows:
	
	\begin{corollary}
		Let the dynamics be constant, that is $Q_{ija}(m) = Q_{ija}$ for all $m \in \mathcal{P}(\mathcal{S})$, and let the generators $Q^d$ given any strategy $d$ be irreducible.  Then the set of all distributions induced by some stationary MFE is given by 
		\begin{align*}
		\bigcup_{A_1 \times \ldots \times A_S \subseteq \mathcal{A}^S} &\Big( \text{conv}\{(\tilde{Q}^{(a_1, \ldots, a_S)})^{-1} \cdot (0,\ldots, 0,1)^T: (a_1, \ldots, a_S) \in A_1 \times \ldots \times A_S \}  \\
		& \qquad \cap \text{Opt}(A_1 \times \ldots \times A_S) \Big).
		\end{align*}
	\end{corollary}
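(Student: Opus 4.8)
The plan is to deduce this directly from Theorem \ref{Stationary:ParitionResult}, after checking that for constant dynamics the set-valued fixed-point maps appearing there collapse to the stated convex hulls. First I would observe that since $Q_{ija}(m) = Q_{ija}$ does not depend on $m$, the hypothesis that $Q^d$ is irreducible for every $d \in D^s$ immediately gives that $Q^d(m)$ is irreducible for every $m \in \mathcal{P}(\mathcal{S})$ and every $d$, and hence, by the observation preceding Theorem \ref{stationary:CharacterizationOfPhiM} that $Q^\pi = \sum_{d \in D^s} \lambda_d Q^d$, that $Q^\pi(m)$ is irreducible for every $\pi \in \Pi^s$ and every $m$. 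Thus Theorem \ref{Stationary:ParitionResult} applies with $\mathcal{T} = \mathcal{P}(\mathcal{S})$, so the set of \emph{all} distributions induced by some stationary mean field equilibrium is
\begin{align*}
\bigcup_{A_1 \times \ldots \times A_S \subseteq \mathcal{A}^S} &\Big( FP\big(\text{conv}\{x^{(a_1, \ldots, a_S)}(\cdot): (a_1, \ldots, a_S) \in A_1 \times \ldots \times A_S\}\big) \\
&\qquad \cap \text{Opt}(A_1 \times \ldots \times A_S) \Big).
\end{align*}

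Next I would show that each map $x^d(\cdot)$ is constant in this setting. By Lemma \ref{stationary:ExplicitRepresentation} the value $x^d(m)$ is the unique stationary distribution of $Q^d(m) = Q^d$, given by $(\tilde{Q}^d(m))^{-1}(0,\ldots,0,1)^T = (\tilde{Q}^d)^{-1}(0,\ldots,0,1)^T$, which does not depend on $m$; write $c^d$ for this vector. Consequently, for a fixed tuple $A_1 \times \ldots \times A_S$ the set-valued map $m \mapsto \text{conv}\{x^{(a_1, \ldots, a_S)}(m): (a_1, \ldots, a_S) \in A_1 \times \ldots \times A_S\}$ is the constant map whose value is $C := \text{conv}\{c^{(a_1, \ldots, a_S)}: (a_1, \ldots, a_S) \in A_1 \times \ldots \times A_S\}$. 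A point $m \in \mathcal{P}(\mathcal{S})$ is then a fixed point of this map precisely when $m \in C$, so its set of fixed points is $C \cap \mathcal{P}(\mathcal{S}) = C$, the last equality because each $c^d$ lies in $\mathcal{P}(\mathcal{S})$ and $\mathcal{P}(\mathcal{S})$ is convex. Substituting $C = \text{conv}\{(\tilde{Q}^{(a_1, \ldots, a_S)})^{-1}(0,\ldots,0,1)^T: (a_1, \ldots, a_S) \in A_1 \times \ldots \times A_S\}$ into the union above yields exactly the claimed formula.

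I do not expect a genuine obstacle here: the only point requiring care is invoking irreducibility to guarantee that the stationary distribution is unique, so that $x^d(\cdot)$ is well defined and genuinely constant; everything else is bookkeeping. Alternatively, one could bypass Theorem \ref{Stationary:ParitionResult} and rerun the argument of Theorem \ref{stationary:CharacterizationOfPhiM} verbatim, noting that with constant dynamics $\phi(m) = \text{conv}(x^{d_1}(m), \ldots, x^{d_n}(m))$ is constant on each optimality cell, and then partitioning $\mathcal{P}(\mathcal{S})$ into these cells; both routes give the same expression.
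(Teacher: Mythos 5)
Your proposal is correct and follows essentially the same route as the paper: the paper likewise derives the corollary from Theorem \ref{Stationary:ParitionResult} by noting that with constant dynamics each $x^d(\cdot)$ is the constant map with value $(\tilde{Q}^d)^{-1}(0,\ldots,0,1)^T$, so the fixed-point sets reduce to the convex hulls themselves. Your write-up merely makes explicit the bookkeeping (irreducibility of $Q^\pi(m)$ for all $\pi$ and $m$, and $FP$ of a constant set-valued map equalling its value) that the paper leaves implicit.
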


	\section{Examples}
	\label{Section:Examples}
	
	This section presents two mean field game models, which have been considered in similar versions in the literature before, and illustrates the application of the techniques presented before. 
	{For both examples we will first solve the individual control problem given a fixed population distribution and compute given this the non-empty optimality sets $\text{Opt}(A_1 \times \ldots A_S)$. Thereafter we will solve the fixed point problems relyin on the approaches introduced before and sketch how the full characterization of the equilibria can be obtained.}

	\subsection{A Consumer Choice Model}
	
	The model we present now is a model with constant dynamics, that is, the dynamics do not depend on the current population distribution. The context and utility function are similar to a model introduced by  \citet{GomesSocio2014} as a toy example on which the authors demonstrated numerical methods for a specific class of finite state mean field game models, namely those yielding systems of hyperbolic partial differential equations. Note however, that the action spaces and choice options of the players differ systematically.
	
	The model is inspired by consumer choices in the mobile phone sector. 
	The utility of using a certain provider is increasing in the share of customers using it, whereas the costs are constant. 
	We assume that the utility coming from the other customers in service is given by the isoelastic utility function $\ln(m_i)+s_i$, with $s_i \in \mathbb{R}$. 
	Note that since $\ln(m_i)$ is always negative for our choice of $m_i$, one cannot interpret $s_i$ as costs directly, but one rather has to think of $s_i$ consisting of two components: the costs themselves and some base utility from service provision.
	The players can now choose in our model whether to stick to their provider $i$ or whether to switch to the other provider, in this case the player additionally faces a time-unit switching cost $c_i \ge 0$. 
	Note that for technical reasons it is important that the player always faces  {independent of the chosen strategy} a small risk of going to the other provider.
	
	These choice options differ substantially from the model in \citet{GomesSocio2014}, where the players could continuously control the rates at which they switch to the other state and were facing costs corresponding to the square of the rate. 
	From an applied point of view it is questionable, in particular when agents are not experts in the game at hand, that players indeed understand what it means to control the transition rates of a Markov chain.
	Indeed, economic experiments show that most people cannot understand the true effect of random devices even in simple settings \citep{PalgraveMixedStrategies}.

	The formal description of the model is now given as follows: 
	{For technical reasons (i.e. to define the rewards for the case $m_i=0$ properly) we introduce for small enough $\delta>0$ the function $f_\delta: \mathbb{R} \rightarrow \mathbb{R}$ given by}
	\begin{displaymath}
	y \mapsto \begin{cases}
	\frac{1}{2\delta} y^2 + \frac{\delta}{2} &\text{if } y \le \delta \\
	y &\text{if } y > \delta
	\end{cases},
	\end{displaymath}  {which is increasing on $[0,1]$.}
	{Using this we then define the transition rates and reward functions by}
	\begin{align*}
	Q^\text{change} &= \begin{pmatrix}
	-b & b \\  b & -b
	\end{pmatrix} & Q^\text{stay} &= \begin{pmatrix}
	-\epsilon & \epsilon \\ \epsilon & -\epsilon
	\end{pmatrix} \\
	r^\text{change} &= \begin{pmatrix}
	\ln( {f_\delta(}m_1 {)}) +s_1 -c \\
	\ln( {f_\delta(}m_2 {)}) + s_2 - c
	\end{pmatrix} &
	r^\text{stay} &= \begin{pmatrix}
	\ln( {f_\delta(}m_1 {)}) +s_1  \\
	\ln( {f_\delta(}m_2 {)}) + s_2 
	\end{pmatrix},
	\end{align*} where $0 < \epsilon <b$ and $s_1,s_2,c>0$. 
	
	In order to analyse the model we first solve the optimality equations
	\begin{align*}
	\beta V_1(m) &= \max \{ \ln( {f_\delta(}m_1 {)}) +s_1 - c - bV_1(m) + bV_2(m), \\
	& \quad \ln( {f_\delta(}m_1 {)}) +s_1 - \epsilon V_1(m) + \epsilon V_2(m)  \} \\
	\beta V_2(m) &= \max \{ \ln( {f_\delta(}m_2 {)}) +s_2 - c + b V_1(m) - bV_2(m), \\
	&\quad   \ln( {f_\delta(}m_2 {)}) + s_2 + \epsilon V_1(m) - \epsilon V_2(m)\},
	\end{align*} which directly yield that it is optimal to change in state $1$ if and only if $V_1(m) -V_2(m) \le -\frac{c}{b-\epsilon}$ and that it is optimal to change in state $2$ if and only if $V_1(m) -V_2(m) \ge \frac{c}{b-\epsilon}$. Thus, we know that choosing to change in both states is never optimal. Therefore we focus on the three potentially optimal strategies $\{\text{change}\} \times \{\text{stay}\}$, $\{\text{stay}\} \times \{\text{stay}\}$ and $\{\text{stay}\}\times \{\text{change}\}$. The expected discounted reward given these strategies is 
	\allowdisplaybreaks
	\begin{align*}
	&V^{\{\text{change}\}\times \{\text{stay}\}}(m) \\
	&= \left( \beta I - Q^{\{\text{change}\}\times \{\text{stay}\}} \right)^{-1}  r^{\{\text{change}\}\times \{\text{stay}\}} \\
	&= \begin{pmatrix}
	\beta +b & -b \\ - \epsilon & \beta + \epsilon
	\end{pmatrix}^{-1} \begin{pmatrix}
	\ln( {f_\delta(}m_1 {)}) + s_1 - c \\  \ln( {f_\delta(}m_2 {)}) + s_2
	\end{pmatrix} \\
	&= \frac{1}{\beta (\beta + b+ \epsilon)} \begin{pmatrix}
	\beta + \epsilon & b \\ \epsilon &  \beta +b
	\end{pmatrix} \begin{pmatrix}
	\ln( {f_\delta(}m_1 {)}) + s_1 - c \\  \ln( {f_\delta(}m_2 {)}) + s_2
	\end{pmatrix} \\
	&= \frac{1}{\beta (\beta + b+ \epsilon)} \begin{pmatrix} 
	(\ln( {f_\delta(}m_1 {)})+s_1) \cdot (\beta +\epsilon) - c \cdot (\beta + \epsilon) + (\ln( {f_\delta(}m_2 {)})+s_2) \cdot b \\
	(\ln( {f_\delta(}m_1 {)})+s_1) \cdot \epsilon - c \cdot \epsilon + (\ln( {f_\delta(}m_2 {)})+s_2) \cdot (\beta +b)
	\end{pmatrix} \\
	&V^{\{\text{stay}\} \times \{\text{stay}\}}  (m) \\
	&= \begin{pmatrix}
	\beta + \epsilon & -\epsilon \\ - \epsilon & \beta + \epsilon
	\end{pmatrix}^{-1} \begin{pmatrix}
	\ln( {f_\delta(}m_1 {)}) + s_1  \\ \ln( {f_\delta(}m_2 {)}) + s_2
	\end{pmatrix} \\
	&= \frac{1}{\beta^2 + 2 \beta \epsilon} \begin{pmatrix} (\ln( {f_\delta(}m_1 {)})+s_1) \cdot (\beta + \epsilon) + (\ln( {f_\delta(}m_2 {)})+s_2) \cdot \epsilon \\
	(\ln( {f_\delta(}m_1 {)})+s_1) \cdot \epsilon + (\ln( {f_\delta(}m_2 {)})+s_2) \cdot (\beta + \epsilon)
	\end{pmatrix} \\
	&V^{\{\text{stay}\} \times \{\text{change}\}} (m) \\ 
	&= \begin{pmatrix}
	\beta + \epsilon & -\epsilon \\ - b & \beta + b
	\end{pmatrix}^{-1} \begin{pmatrix}
	\ln( {f_\delta(}m_1 {)}) + s_1  \\  \ln( {f_\delta(}m_2 {)}) + s_2-c
	\end{pmatrix} \\
	&= \frac{1}{\beta(\beta + b+ \epsilon)} \begin{pmatrix}
	(\ln( {f_\delta(}m_1 {)})+s_1) \cdot (\beta + b) - c \cdot \epsilon + ( \ln( {f_\delta(}m_2 {)})+s_2) \cdot \epsilon \\
	(\ln( {f_\delta(}m_1 {)})+s_1) \cdot b - c \cdot (\beta + \epsilon) + (\ln( {f_\delta(}m_2 {)}) + s_2) \cdot (\beta + \epsilon)
	\end{pmatrix}.
	\end{align*} In order to understand which strategy is optimal, we compute the differences $V^d_1(m) -V^d_2(m)$:
	\begin{align*}
	& V_1^{\{\text{change}\} \times \{\text{stay}\}}(m) - V_2^{\{\text{change}\} \times \{\text{stay}\}}(m) \\
	&= \frac{1}{\beta + b+ \epsilon} \left( -c + \ln( {f_\delta(}m_1 {)})-\ln( {f_\delta(}m_2 {)}) + s_1-s_2 \right) \\
	&V_1^{\{\text{stay}\} \times \{\text{stay}\}}(m) - V_2^{\{\text{stay}\} \times \{\text{stay}\}}(m) \\
	&= \frac{1}{\beta + 2 \epsilon} \left( \ln( {f_\delta(}m_1 {)}) -\ln( {f_\delta(}m_2 {)}) + s_1-s_2 \right) \\
	&V_1^{\{\text{stay}\} \times \{\text{change}\}}(m) - V_2^{\{\text{stay}\} \times \{\text{change}\}}(m) \\
	&= \frac{1}{\beta + b+ \epsilon} \left( c + \ln( {f_\delta(}m_1 {)}) - \ln( {f_\delta(}m_2 {)})+s_1-s_2 \right)
	\end{align*}
	
	Using that $f_\delta(\cdot)$ is increasing and that $m_1 = 1-m_2$ we obtain for small enough $\delta>0$ that
	\begin{align*}
	&V_1^{\{\text{change}\} \times \{\text{stay}\}}(m) - V_2^{\{\text{change}\} \times \{\text{stay}\}}(m) \le - \frac{c}{b- \epsilon} \\
	&\Leftrightarrow -c + \ln( {f_\delta(}m_1 {)}) - \ln( {f_\delta(}m_2 {)}) +s_1-s_2 \le  \frac{-c(\beta+b+\epsilon)}{b- \epsilon} \\
	&\Leftrightarrow \ln \left( \frac{m_1}{1-m_1} \right)  \le \frac{-c(\beta + 2 \epsilon)}{b- \epsilon} -s_1+s_2 \\
	&\Leftrightarrow m_1 \le \frac{\exp \left( \frac{-c(\beta + 2\epsilon)}{b-\epsilon}-s_1+s_2 \right)}{1+\exp \left( \frac{-c(\beta + 2\epsilon)}{b-\epsilon}-s_1+s_2 \right)} 
	\end{align*} and analogously we obtain
	\begin{align*}
	&\frac{-c}{b- \epsilon} \le V_1^{\{\text{stay}\} \times \{\text{stay}\}} (m) -V_2^{\{\text{stay}\} \times \{\text{stay}\}} \le  \frac{c}{b-\epsilon} \\
	& \Leftrightarrow \frac{\exp \left( \frac{-c(\beta + 2\epsilon)}{b-\epsilon}-s_1+s_2 \right)}{1+\exp \left( \frac{-c(\beta + 2\epsilon)}{b-\epsilon}-s_1+s_2 \right)}  \le m_1 \le  \frac{\exp \left( \frac{c(\beta + 2\epsilon)}{b-\epsilon}-s_1+s_2 \right)}{1+\exp \left( \frac{c(\beta + 2\epsilon)}{b-\epsilon}-s_1+s_2 \right)} 
	\end{align*} as well as
	\begin{align*}
	&\frac{c}{b-\epsilon} \le V_1^{\{\text{stay}\} \times \{\text{change}\}}(m) - V_2^{\{\text{stay}\} \times \{\text{change}\}}(m) \\
	&\Leftrightarrow  \frac{\exp \left( \frac{c(\beta + 2\epsilon)}{b-\epsilon}-s_1+s_2 \right)}{1+\exp \left( \frac{c(\beta + 2\epsilon)}{b-\epsilon}-s_1+s_2 \right)} \le m_1. 
	\end{align*} As a next step we compute the fixed points given the three strategies $\{\text{change}\} \times \{\text{stay}\}$, $\{\text{stay}\} \times \{\text{stay}\}$ and $\{\text{stay}\}\times \{\text{change}\}$, noting that we never need to consider the fixed point given $\{\text{change}\}\times \{\text{change}\}$, as it is never optimal to randomize in both states. Since we face standard continuous time Markov chains, this task is simple and we obtain for each strategy a unique fixed point:
	$$x^{\{\text{change}\}\times \{\text{stay}\}} = \begin{pmatrix}
	\frac{\epsilon}{b+\epsilon} \\ \frac{b}{b+ \epsilon} 
	\end{pmatrix}, \quad x^{\{\text{stay}\} \times \{\text{stay}\}} = \begin{pmatrix}
	\frac{1}{2} \\ \frac{1}{2}
	\end{pmatrix}, \quad x^{\{\text{stay}\} \times \{\text{change}\}} = \begin{pmatrix}
	\frac{b}{b+\epsilon} \\ \frac{\epsilon}{b+ \epsilon} 
	\end{pmatrix}.$$ Depending on the choice of parameters we have between one and five equilibria: For simplicity we write 
	$$d_1 = \frac{\exp \left( \frac{-c(\beta + 2\epsilon)}{b-\epsilon}-s_1+s_2 \right)}{1+\exp \left( \frac{-c(\beta + 2\epsilon)}{b-\epsilon}-s_1+s_2 \right)} \quad \text{ and } d_2 = \frac{\exp \left( \frac{c(\beta + 2\epsilon)}{b-\epsilon}-s_1+s_2 \right)}{1+\exp \left( \frac{c(\beta + 2\epsilon)}{b-\epsilon}-s_1+s_2 \right)}$$ and note that we always have $d_1 < d_2$.
	
	We can now obtain the exact number and position of equilibria by coefficient comparison. The strategies that are used in mixed strategy equilibria are then obtained by solving the balance equation $m^TQ^\pi(m)=0$ with $m$ being the population distribution of the mixed strategy equilibrium. We will omit this last step, which is basically the task of solving a system of linear equations.
	
	\begin{itemize}
		\item[(i)] If $d_1 < \frac{\epsilon}{b+ \epsilon}$ and $d_2 < \frac{1}{2}$, then $x^{\{\text{stay}\}\times \{\text{change}\}}$ together with the deterministic strategy $\{\text{stay}\} \times \{\text{change}\}$ is the unique stationary mean field equilibrium. 
		
		\item[(ii)] If $d_1 < \frac{\epsilon}{b+ \epsilon}$ and  $\frac{1}{2} \le d_2 \le \frac{b}{b+\epsilon}$, then $x^{\{\text{stay}\}\times \{\text{stay}\}}$ together with the deterministic strategy $\{\text{stay}\} \times \{\text{stay}\}$ and $x^{\{\text{stay}\} \times \{\text{change}\}}$ together with the deterministic strategy $\{\text{stay}\} \times \{\text{change}\}$ are the deterministic equilibria. Furthermore, there is one mixed strategy equilibrium with $m_1 =d_2$ that randomizes over $\{\text{stay}\}$ and $\{\text{change}\}$ in the second state.
		If $k_2 \in \{\frac{1}{2}, \frac{b}{b+\epsilon}\}$, then the mixed strategy equilibrium coincides with the pure strategy equilibrium with the same population distribution.
		
		\item[(iii)] If $d_1< \frac{\epsilon}{b+\epsilon}$ and $\frac{b}{b+\epsilon}<d_2$, then the only equilibrium is given by $x^{\{\text{stay}\} \times \{\text{stay}\}}$ together with the deterministic strategy $\{\text{stay}\} \times \{\text{stay}\}$.
		
		\item[(iv)] If $\frac{\epsilon}{b+ \epsilon}\le d_1 \le \frac{1}{2}$ and $d_2 < \frac{1}{2}$, then  $x^{\{\text{stay}\}\times \{\text{change}\}}$ together with the deterministic strategy $\{\text{stay}\} \times \{\text{change}\}$ and  $x^{\{\text{change}\}\times \{\text{stay}\}}$ together with the deterministic strategy $\{\text{change}\} \times \{\text{stay}\}$ are the deterministic equilibria. Furthermore, there is a mixed strategy equilibrium at the point with $m_1=d_1$ that randomizes over $\{\text{stay}\}$ and $\{\text{change}\}$ in the first state. 
		If $k_1 =\frac{\epsilon}{b+\epsilon}$, then the mixed strategy equilibrium coincides with the pure strategy equilibrium with the same population distribution.
		
		\item[(v)] If $ \frac{\epsilon}{b+ \epsilon} \le d_1 \le \frac{1}{2}$ and $\frac{1}{2} \le d_2 \le \frac{b}{b+\epsilon}$, then $x^{\{\text{stay}\}\times \{\text{change}\}}$ together with the deterministic strategy $\{\text{stay}\} \times \{\text{change}\}$,  $x^{\{\text{stay}\}\times \{\text{stay}\}}$ together with the deterministic strategy $\{\text{stay}\} \times \{\text{stay}\}$ and $x^{\{\text{change}\}\times \{\text{stay}\}}$ together with the deterministic strategy $\{\text{change}\} \times \{\text{stay}\}$ are the deterministic equilibria. Furthermore, there is one mixed strategy equilibrium with $m_1=d_1$ that randomizes over $\{\text{stay}\}$ and $\{\text{change}\}$ in the first state and one mixed strategy equilibrium with $m_1 =d_2$ that randomizes over $\{\text{stay}\}$ and $\{\text{change}\}$ in the second state.
		If $k_1 \in \{\frac{\epsilon}{b+\epsilon}, \frac{1}{2}\}$ or $k_2 \in \{\frac{1}{2}, \frac{b}{b+\epsilon}\}$, then the mixed strategy equilibrium coincides with the pure strategy equilibrium with the same population distribution.
		
		\item[(vi)] If $ \frac{\epsilon}{b+ \epsilon} \le d_1 \le \frac{1}{2}$ and $\frac{b}{b+\epsilon} < d_2$, then $x^{\{\text{stay}\}\times \{\text{stay}\}}$ together with the deterministic strategy $\{\text{stay}\} \times \{\text{stay}\}$ and $x^{\{\text{change}\}\times \{\text{stay}\}}$ together with the deterministic strategy $\{\text{change}\} \times \{\text{stay}\}$ are the deterministic equilibria. Furthermore, there is one mixed strategy equilibrium with $m_1=d_1$ that randomizes over $\{\text{stay}\}$ and $\{\text{change}\}$ in the first state.
		If $k_1 \in \{\frac{\epsilon}{b+\epsilon}, \frac{1}{2}\}$, then the mixed strategy equilibrium coincides with the pure strategy equilibrium with the same population distribution.
		
		\item[(vii)] If $\frac{1}{2}< d_1$ and $\frac{1}{2}< d_2 \le \frac{b}{b+\epsilon}$, then $x^{\{\text{stay}\}\times \{\text{change}\}}$ together with the deterministic strategy $\{\text{stay}\} \times \{\text{change}\}$ and  $x^{\{\text{change}\}\times \{\text{stay}\}}$ together with the deterministic strategy $\{\text{change}\} \times \{\text{stay}\}$ are the deterministic equilibria. Furthermore, there is a mixed strategy equilibrium at the point with $m_1=d_2$ that randomizes over $\{\text{stay}\}$ and $\{\text{change}\}$ in the second state. 
		If $k_2 = \frac{b}{b+\epsilon}$, then the mixed strategy equilibrium coincides with the pure strategy equilibrium with the same population distribution.
		
		\item[(viii)] If $\frac{1}{2}< d_1$ and $\frac{b}{b+\epsilon} < d_2$, then then $x^{\{\text{change}\}\times \{\text{stay}\}}$ together with the deterministic strategy $\{\text{change}\} \times \{\text{stay}\}$ is the unique stationary mean field equilibrium. 
	\end{itemize}

	\subsection{A Simplified Corruption Model}
	\label{Section:ExampleCorruption}

	We now consider a simplified version of the corruption model presented in \citet{KolokoltsovCorruption2017}: 
	In their model a player can be in one of the three states honest ($H$), corrupt ($C$) and reserved ($R$). 
	The corrupt players get a higher wage than the honest players, which in turn get a higher wage than the reserved players that have been convicted to be corrupt. 
	For simplicity we set $w_C = 10$, $w_H =5$ and $w_R = 0$ and exclude the fine for being convicted, which is an additional feature in their model.
	The players can choose given that they are not reserved, whether they want to stay corrupt/honest or whether they want to switch behaviour. 
	In this case they become honest/corrupt with the rate $b$. 
	A player that is reserved is recovered with a fixed rate $r$ and we assume that he will then be honest.
	Additionally the model captures ``social pressure'' in two ways: First, the more players are corrupt the higher is the pressure (one cannot escape) to also become corrupt. Second, the more players are honest the higher is the rate to become convicted to be corrupt.
	In the model of \citet{KolokoltsovCorruption2017} there is also a principal agent that convicts players, for simplicity we decided to ignore this feature of the model as well.

	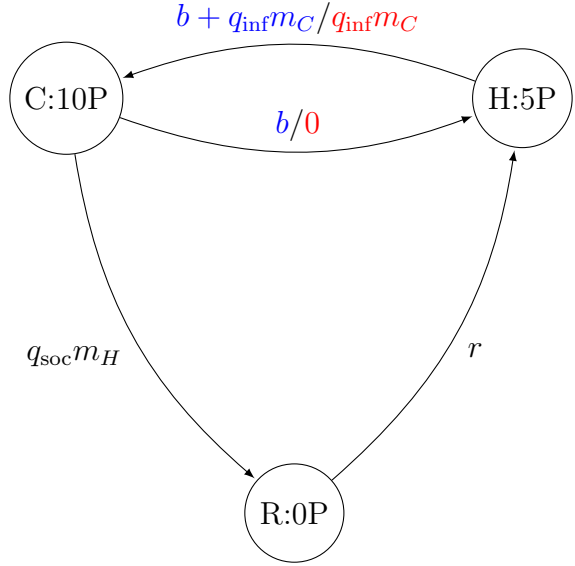
\begin{figure}[h]
		\begin{center}
			\begin{tikzpicture}
			
			\node[state] at (0,0) (A)     {C:10P};
			\node[state] at (6,0) (B)     {H:5P};
			\node[state] at (3, -5.5)  (C)     {R:0P};
			
			\draw[every loop, >=latex, auto=right] 
			(A) edge[bend right=20, auto=left] node {\textcolor{blue}{$b$}/\textcolor{red}{$0$}} (B)
			(B) edge[bend right=20] node {\textcolor{blue}{$b+q_\text{inf}m_C$}/\textcolor{red}{$q_\text{inf}m_C$}} (A)
			(A) edge[bend right=20] node {$q_\text{soc}m_H$} (C)
			(C) edge[bend right=20] node {$r$} (B);
			
			\end{tikzpicture}
		\end{center}
		\caption{Representation of the second example}
		\label{stationary:CorruptionFigure}
	\end{figure}
	
	The formal characterization is given by $\mathcal{A} = \{\text{change},\text{stay}\}$ together with
	\begin{align*}
	Q_\text{change} &= \begin{pmatrix}
	-(b+q_\text{soc}m_H) & b & q_\text{soc}m_H \\
	b+q_\text{inf}m_C & -(b+q_\text{inf}m_C) & 0 \\
	0 & r & -r
	\end{pmatrix} \\
	Q_\text{stay} &= \begin{pmatrix}
	-q_\text{soc}m_H & 0 & q_\text{soc}m_H \\
	q_\text{inf} m_C & -q_\text{inf}m_C & 0 \\
	0 & r & -r
	\end{pmatrix}
	\end{align*}
	and $c_\text{change} = c_\text{stay} = (10,5,0)^T$, where all parameters $b$, $q_\text{inf}$, $q_\text{soc}$ and $r$ are strictly positive. A visualization of this model is given in Figure \ref{stationary:CorruptionFigure}.
	
	We start with computing the value function for given $m \in \mathcal{P}(\mathcal{S})$ as the unique solution of
	\begin{align*}
	\beta V_1(m) &= \max \{ 10 - (b+q_\text{soc}m_H) V_1(m) + bV_2(m) + q_\text{soc}m_H V_3(m), \\
	&\quad  10 -q_\text{soc}m_HV_1(m) +q_\text{soc}m_H V_3(m)\} \\
	\beta V_2(m) &= \max \{ 5 + (b+q_\text{inf} m_C) V_1(m) - (b+q_\text{inf}m_C)V_2, \\
	&5 + q_\text{inf}m_C V_1(m) - q_\text{inf}m_C V_2(m) \} \\
	\beta V_3(m) &= r V_2(m) - rV_3(m).
	\end{align*} 
	
	We directly see that one should choose to change in state $1$ if $V_1(m) \le V_2(m)$ and to stay in state $1$ if $V_1(m) \ge V_2(m)$ and one should chose to change in state $2$ if $V_1(m) \ge V_2(m)$ and to stay in state $2$ if $V_2(m) \ge V_1(m)$. A straightforward calculation yields that the optimality sets are given by
	\begin{align*}
	\text{Opt} (\{\text{stay} \} \times \{\text{change}\}) &= \left\{ m \in \mathcal{P}(\mathcal{S}): m_H < \frac{r+\beta}{q_\text{soc}} \right\} \\
	\text{Opt} (\{\text{change}\} \times \{\text{stay}\}) &= \left\{ m \in \mathcal{P}(\mathcal{S}): m_H > \frac{r+\beta}{q_\text{soc}} \right\} \\
	\text{Opt} (\{\text{stay}, \text{change}\} \times \{\text{stay}, \text{change} \}) &= \left\{m \in \mathcal{P}(\mathcal{S}): m_H = \frac{r+\beta}{q_\text{soc}} \right\}. 
	\end{align*}	
	
	Note however, that depending on the choice of parameters the quantity $\frac{r+\beta}{q_\text{soc}}$ is greater than one, exactly one or less than one.
	In the first case  only $\text{Opt} (\{\text{stay} \} \times \{\text{change}\})$ is non-empty, in the second case $\text{Opt} (\{\text{stay} \} \times \{\text{change}\})$ and $\text{Opt} (\{\text{stay}, \text{change}\} \times \{\text{stay}, \text{change} \})$ are non-empty and in the third case all optimality sets are non-empty.
	
	As a second step we use the cut criterion from Theorem \ref{stationary:CutCriterion} for the set $\mathcal{T} = \{R\}$, which yields the equation $q_\text{soc}m_Hm_C = rm_R$ for all stationary strategies. Together with the equation $1=m_R + m_H + m_C$ we obtain 
	\begin{align*}
	q_\text{soc} m_H m_C = r(1-m_H-m_C) &\Leftrightarrow m_H (q_\text{soc} m_C +r) =  r - rm_C \\
	&\Leftrightarrow m_H = \frac{r(1-m_C) }{q_\text{soc}m_C + r}
	\end{align*} or equivalently $m_C = r(1-m_H)/(q_\text{soc} m_H+r),$ note that this representations directly imply that $m_C \in [0,1] \Leftrightarrow m_H \in [0,1]$. Furthermore we obtain that the sum of $m_C$ and $m_H$ is always less than one, thus any mean field equilibrium is uniquely characterized by describing $m_C$. More precisely, any stationary mean field equilibrium has a distribution of the form 
	\begin{equation}
	\label{stationary:Corruption:All MFESatisfy}
	\left( m_C, \frac{r-rm_C}{q_\text{soc} m_C +r}, \frac{q_\text{soc} - q_\text{soc} m_C^2}{q_\text{soc} m_C + r} \right).
	\end{equation}
	
	It now remains to consider the fixed point problems given the possible optimal strategies: We start by noting that stationary points of the dynamics given $\{\text{stay} \} \times \{\text{change} \}$ exists whenever \eqref{stationary:Corruption:All MFESatisfy} and $$ -q_\text{soc}m_C m_H + q_\text{inf}m_Cm_H + bm_H = 0 \Leftrightarrow m_H(m_C(q_\text{soc}-q_\text{inf})-b)=0$$ is satisfied, which is true if $m_H = 0$ or $m_C = b/(q_\text{soc}-q_\text{inf})$. Thus whenever these points lie in $\text{Opt}(\{\text{stay}\} \times \{\text{change}\})$ or $\text{Opt}(\{\text{stay}, \text{change} \} \times \{\text{stay},\text{change}\})$ we have a deterministic mean field equilibrium given the strategy $\{\text{stay}\} \times \{\text{change}\}$.
	
	Similarly, stationary points of the dynamics given $\{\text{change}\} \times \{\text{stay}\}$ have to satisfy \eqref{stationary:Corruption:All MFESatisfy} and $$-bm_C - q_\text{soc}m_Hm_C + q_\text{inf}m_Cm_H=0 \Leftrightarrow m_C (m_H(q_\text{inf}-q_\text{soc})-b)=0,$$ which is true if either $m_C=0$ or $m_H = b/(q_\text{inf}-q_\text{soc})$. Thus whenever these points lie in $\text{Opt}(\{\text{change}\} \times \{\text{stay}\})$ or $\text{Opt}(\{\text{stay}, \text{change} \} \times \{\text{stay},\text{change}\})$ we have a deterministic mean field equilibrium given the strategy $\{\text{change}\} \times \{\text{stay}\}$.
	
	When searching for stationary points given the dynamics of mixed strategies, which might be equilibria,we can restrict to those that lie inside the set $\text{Opt}(\{\text{stay}, \text{change}\} \times \{\text{stay},\text{change}\})$. In this set all equilibria have to satisfy $m_H = \frac{r+\beta}{q_\text{soc}}$, that is
	\begin{align*}
	\frac{r+\beta}{q_\text{soc}} = \frac{1-m_C}{\frac{q_\text{soc}}{r} m_C +1} 
	&\Leftrightarrow m_C = \frac{r(q_\text{soc}-r-\beta)}{(2 r + \beta) q_\text{soc}}.
	\end{align*} 
	It remains to check whether there is a strategy such that the point 
	$$\left(\frac{r(q_\text{soc}-r-\beta)}{(2 r + \beta) q_\text{soc}}, \frac{r+\beta}{q_\text{soc}},  \frac{(r+ \beta)(q_\text{soc}-r-\beta)}{(2r+ \beta)q_\text{soc}} \right)$$ 
	is indeed a fixed point for the individual dynamics equation given strategy $\pi$, which means that we have to find constants $\pi_{1,\text{change}}$ and $\pi_{2,\text{change}}$ that satisfy for this point 
	$$(- \pi_{1,\text{change}} b - q_\text{soc} m_H)m_C + \pi_{2,\text{change}} b m_H + q_\text{inf} m_Cm_H = 0,$$ which is parameter-dependent.
	
	As in the previous example, we would now need to perform a case analysis to obtain the exact set of mean field equilibria for all possible equilibrium constellations. Additionally, we would need to solve the balance equations $m^TQ(m)=0$ for $\pi$ for any $m$ that is a candidates for randomized equilibria. Both tasks are simple, but tedious and we omit them here.
	
	\appendix
	\section{Appendix}
	\begin{proof}[Proof of Lemma \ref{stationary:Lemma:Streichungsmatrix}]	
		Since we face a conservative generator, we see by definition that $Q_{ij} \ge 0$ for all $i \neq j$ and that $\sum_{j \in \mathcal{S}} Q_{ij} =0$.
		Thus, all off-diagonal entries of $Q$ are non-negative and the row sum is always zero. 
		Furthermore, by requiring irreducibility we do not have a row of zeros, thus the diagonal entries are strictly negative.
		
		The matrix $Q_{SS}'$ again has non-negative off-diagonal entries, strictly negative diagonal entries and row sum is less or equal zero. 
		Furthermore the irreducibility of $Q$ implies that $Q_{iS}$ is non-zero for at least one $i \in \{1, \ldots, S-1\}$ and thus the row sum for at least one $i$ is strictly negative.
		We now show that there exists a vector $x \in \mathbb{R}^{S-1}$ such that $x \ge 0$ and $x_i>0$ for at least one $i \in \{1, \ldots, S-1\}$ such that $-Q_{SS}' x>0$ (where the inequality signs hold pointwise). 
		
		For this we first note that for all $k \in \{1, \ldots, S-1\}$	
		\begin{align*}
		(Q_{SS}' x)_k &= \sum_{l=1}^{S-1} x_l (-Q_{SS}')_{kl} =  \sum_{l=1}^{S-1} x_l (-Q)_{kl} \\
		&= \sum_{l \in \{1, \ldots, S-1\} \setminus \{k\} } -x_l Q_{kl} + x_k \cdot (-Q)_{kk} \\
		&= \sum_{l \in \{1, \ldots, S-1\} \setminus \{k\} } -x_l Q_{kl}  + x_k \sum_{l \in \mathcal{S} \setminus \{k\}} Q_{kl} \\
		&= \sum_{l \in \{1, \ldots, S-1\} \setminus \{k\} } (x_k -x_l)Q_{kl} + x_k Q_{kS}. 
		\end{align*} From which one directly sees that $(Q_{SS}' x)_k$ is increasing in $x_k$ and decreasing in $x_l$ since $Q_{kl}>0$ for all $l \neq k$.
		
		Define for all $x \in ([0,\infty))^{S-1}$ the set $\mathcal{T}(x) = \{ k \in \{1, \ldots, S-1\} : (Q_{SS}'x)_k >0\}$ as the set of all indices where the $i$-th component of  $Q_{SS}'x$ is greater than zero. 
		We will now define inductively an sequence $x^n \in \mathbb{R}^S$ a sequence of vectors such that for some $m \in \mathbb{N}$ we have $\mathcal{T}(x^m)=\{1, \ldots, S-1\}$. 
		We will start with the vector $x^0$ with a one at every component and construct $x^n$ in such a way that $\mathcal{T}(x^{n-1}) \subsetneq \mathcal{T}(x^{n})$ for all $n \in \mathbb{N}$ and moreover $x_i^n>0$ for all $i \in \{1, \ldots, S-1\}$ and all $n \in \mathbb{N}_0$.
		
		Starting with $x^0$ being the vector of ones directly implies that $\mathcal{T}(x^0) \neq \emptyset$, as we have previously seen that the row sum, which is $(Q_{SS}'x^0)_i$ is positive for some index $i \in \{1, \ldots, S-1\}$.
		
		In the $n$-th step ($n \in \mathbb{N}$) we check whether $\mathcal{T}(x^{n-1}) = \{1, \ldots, S-1\}$. 
		If this is the case we have shown that a vector with the desired properties exists, else there is an index $j \in \{ 1, \ldots, S-1\} \setminus \mathcal{T}(x^{n-1})$. 
		In this case let $i \in \mathcal{T}(x^{n-1})$. Since our CTMC is irreducible the underlying transition graph is strongly connected, which implies that there is a path from $j$ to $i$. 
		Let $\tilde{k}$ be the first node on this path that lies in $\mathcal{T}(x^{n-1})$ and let $k$ be its predecessor. Note that by definition of the transition graph $Q_{k\tilde{k}}>0$.
		Now let $x^n$ be as follows $x_i^n = x_i^{n-1}$ for all $i \in \{1, \ldots, S-1\} \setminus \{\tilde{k}\}$ and $x_{\tilde{k}}^n$ such that $$0<x_{\tilde{k}}^n < x_{\tilde{k}}^{n-1} \quad \text{and} \quad (Q_{SS}'x^n)_{\tilde{k}}>0.$$ 
		This is possible as $x_{\tilde{k}}^{n-1}>0$ and $(Q_{SS}'x^{n-1})_{\tilde{k}}>0$ and moreover $(Q_{SS}'x)_{\tilde{k}}$ is continuous and increasing in $ x_{\tilde{k}}$.
		
		It is obvious that $x^n_l>0$ for all $l \in \{1, \ldots, S-1\}$. It remains to check whether $m \in \mathcal{T}(x^{n-1})$ implies $m \in \mathcal{T}(x^n)$ and to show that $k \in \mathcal{T}(x^n)$, as this proves that $\mathcal{T}(x^{n-1}) \subsetneq \mathcal{T}(x^n)$. 
		
		As $Q_{SS}'(x^n)_{\tilde{k}}>0$ by construction it is obvious that $\tilde{k} \in \mathcal{T}(x^n)$.
		For $m \neq \tilde{k}$ we see again as $(Q_{SS}'x)_m$ is decreasing in $x_{\tilde{k}}$ and $x_m^n = x_m^{n-1}$ that 
		\begin{align*}
		(Q_{SS}' x^n)_m &= \sum_{l \in \{1, \ldots, S-1\} \setminus \{m\}} (x_m^n - x_l^n) Q_{ml} + x_m^n Q_{mS}  \\
		&\ge \sum_{l \in \{1, \ldots, S-1\}\setminus \{m\}} (x_m^{n-1} - x_l^{n-1}) Q_{ml} + x_mQ_{mS} = (Q_{SS}'x^{n-1})_m.
		\end{align*} Thus if $m \in \mathcal{T}(x^{n-1})$, then $m \in \mathcal{T}(x^n)$.
		
		For $m=k$ we have a strict inequality as $(x_k^n -x_{\tilde{k}}^n)Q_{k\tilde{k}} >(x_k^{n-1} - x_{\tilde{k}}^{n-1}) Q_{k\tilde{k}}$ since $Q_{k\tilde{k}}>0$ and $x_{\tilde{k}}^n < x_{\tilde{k}}^{n-1}$. 
		Thus $(Q_{SS}' x^n)_k > (Q_{SS}' x^{n-1})_k =0$, which directly implies $k \in \mathcal{T}(x^n)$, thus $\mathcal{T}(x^{n-1})\subsetneq \mathcal{T}(x^{n})$.
		
		Thus, we indeed obtained a vector $x \in \mathbb{R}^{S-1}$ such that $x \ge 0$ and $x_i>0$ for at least one $i \in \{1, \ldots, S-1\}$ such that $-Q_{SS}' x>0$. 
		Now we can conclude that it is a non-singular $M$-matrix in the sense of \citet[Chapter 6]{BermanNonnegative1979} and furthermore that all eigenvalues of our matrix have positive real part. 
		Therefore, all eigenvalues of $Q_{SS}$ have negative real part.
		
		As the matrix $Q_{SS}$ is real, all complex eigenvalues appear in pairs of complex conjugates. 
		As the product of a complex number and its complex conjugate is non-negative, we obtain that the determinant which can be computed as the product of all eigenvalues is as claimed since only the number of real eigenvalues or more specifically the parity of this number matters for the sign of the determinant. 
		As the parity of the number of real eigenvalues always equals the parity of $S-1$, we conclude that the sign pattern of the determinant is as claimed. 
	\end{proof}
	
	\newpage

	\bibliographystyle{plainnat}
	\bibliography{literature}

\end{document}